\newcommand{\kom}[1]{}
\renewcommand{\kom}[1]{{\bf [#1]}}
 \def\1{\raisebox{2pt}{\rm{$\chi$}}}
\newtheorem{theorem}{Theorem}[section]
\newtheorem{corollary}[theorem]{Corollary}
\newtheorem{lemma}[theorem]{Lemma}
\newcommand{\RR}{{\mathbb R}}
\newcommand{\N}{{\mathbb N}}
\newcommand{\Z}{{\mathbb Z}}
 \def\1{\raisebox{2pt}{\rm{$\chi$}}}
\def\vint_#1{\mathchoice%
          {\mathop{\kern 0.2em\vrule width 0.6em height 0.69678ex depth -0.58065ex
                  \kern -0.8em \intop}\nolimits_{\kern -0.4em#1}}%
          {\mathop{\kern 0.1em\vrule width 0.5em height 0.69678ex depth -0.60387ex
                  \kern -0.6em \intop}\nolimits_{#1}}%
          {\mathop{\kern 0.1em\vrule width 0.5em height 0.69678ex
              depth -0.60387ex
                  \kern -0.6em \intop}\nolimits_{#1}}%
          {\mathop{\kern 0.1em\vrule width 0.5em height 0.69678ex depth -0.60387ex
                  \kern -0.6em \intop}\nolimits_{#1}}}
\def\vintslides_#1{\mathchoice%
          {\mathop{\kern 0.1em\vrule width 0.5em height 0.697ex depth -0.581ex
                  \kern -0.6em \intop}\nolimits_{\kern -0.4em#1}}%
          {\mathop{\kern 0.1em\vrule width 0.3em height 0.697ex depth -0.604ex
                  \kern -0.4em \intop}\nolimits_{#1}}%
          {\mathop{\kern 0.1em\vrule width 0.3em height 0.697ex depth -0.604ex
                  \kern -0.4em \intop}\nolimits_{#1}}%
          {\mathop{\kern 0.1em\vrule width 0.3em height 0.697ex depth -0.604ex
                  \kern -0.4em \intop}\nolimits_{#1}}}
\newcommand{\intav}{\vint}
\newcommand{\aveint}[2]{\mathchoice%
          {\mathop{\kern 0.2em\vrule width 0.6em height 0.69678ex depth -0.58065ex
                  \kern -0.8em \intop}\nolimits_{\kern -0.45em#1}^{#2}}%
          {\mathop{\kern 0.1em\vrule width 0.5em height 0.69678ex depth -0.60387ex
                  \kern -0.6em \intop}\nolimits_{#1}^{#2}}%
          {\mathop{\kern 0.1em\vrule width 0.5em height 0.69678ex depth -0.60387ex
                  \kern -0.6em \intop}\nolimits_{#1}^{#2}}%
          {\mathop{\kern 0.1em\vrule width 0.5em height 0.69678ex depth -0.60387ex
                  \kern -0.6em \intop}\nolimits_{#1}^{#2}}}
\newcommand{\rn}{\mathbb{R}^n}
\begin{document}
\title[
Continuity for maximal operators]{Endpoint Sobolev and BV Continuity for maximal operators, II
}
\author{
Jos{\'e} Madrid}

\date{\today}
\subjclass[2010]{42B25, 26A45, 46E35, 46E39.}
\keywords{Fractional maximal operator, Discrete maximal operator, Functions of bounded variation, Sobolev spaces}

\address{Department of Mathematics, Aalto University, P.O. Box 11100, FI--00076 Aalto University, Finland}
\email{jose.madridpadilla@aalto.fi}

\address{The Abdus Salam International Centre for Theoretical Physics, Str. Costiera 11, 34151 Trieste, Italy}
\email{jmadrid@ictp.it}

\maketitle
\begin{abstract}In this paper we study some questions about the continuity of classical and fractional maximal operators in the Sobolev space $W^{1,1}$, in both continuous and discrete setting, giving a positive answer to two questions posed recently, one of them regarding the
continuity of the map $f \mapsto \big(\widetilde M_{\beta}f\big)'$ from $W^{1,1}(\mathbb{R})$ to $L^q(\mathbb{R})$, for $q=\frac{1}{1-\beta}$. Here $\widetilde M_{\beta}$ denotes the non-centered fractional maximal operator on $\RR$ with $\beta\in(0,1)$. The second one regarding the continuity of the discrete centered maximal operator in the space of functions of bounded variation $BV(\Z)$, complementing some recent boundedness results.
\end{abstract}

\section{Introduction}

In this paper we continue with the program started in \cite{CMP}, proving two results related to the continuity of maximal operators in the continuous and discrete setting. 
We will use the same notation and terminology as in \cite{CMP} to facilitate the references.

\subsection{Continuous setting}

The regularity of maximal operators has been broadly study during last years for many aunthors. The starting point of this theory was the boundedness result obtained by Kinnunen in \cite{Ki} where he proved that the classical maximal operator $M$ is bounded from $W^{1,p}(\rn)$ to $W^{1,p}(\rn)$, later, it was extended by Kinninunen and Saksman to the fractional context in \cite{KiSa}.


The main object of study in this paper will be the non-centered fractional maximal operator $\widetilde M_{\beta}$ which for a given function $f\in L^1_{loc}(\rn)\,$ and $0\leq \beta<n\,$, it is defined by\footnote{The supremum is taken over closed balls.}
\begin{equation}\label{eq:1}
\widetilde M_{\beta} f(x):=\sup_{B(z,r)\ni x}
\frac{r^\beta}{|B(z,r)|}\int_{B(z,r)}|f(y)|\,dy\,=:\,\sup_{B(z,r)\ni x}r^{\beta}\intav_{B(z,r)}|f(y)|\,dy\,
\end{equation}
for every $x\in\rn\,$. We can also consider the centered version of $\widetilde M_{\beta}$, 
which is given by taking the supremum over all the balls centered at $x$, it is denoted by $M_{\beta}$. 
In the case $\beta=0$, we recover the classical non-centered Hardy--Littlewood maximal operator $\widetilde M$.

Kinnunen and Saksman proved that 
given $0<\beta<n$, if $p>1$ and $1/q=1/p-\beta/n$ therefore $\widetilde M_{\beta}$ is bounded from $W^{1,p}(\rn)$ to $W^{1,q}(\rn)$. The continuity of this operator follows by adapting Luiro's ideas in \cite{L}. In the case $p=1$, the previous statement is not true. This endpoint case has strongly attracted the attention of many authors. The first result regarding this case was obtained by Tanaka \cite{Ta}, who proved that the map $T_{0}$ from $W^{1,1}(\mathbb{R})$ to $L^{1}(\mathbb{R})$ given by sending $f$ to $D\widetilde Mf$ is bounded (here $D\widetilde Mf$ denotes the weak derivative of $\widetilde Mf$), that result was later improved by Aldaz and Per\'ez L\'azaro in \cite{AlPe}. The continuity of the operator $T_{0}$ was recently established by Carneiro, Madrid and Pierce \cite[Theorem 1]{CMP}. In their paper many questions were posed, one of them is the following:

\noindent{\bf Question 1.} Let $0 < \beta < 1$ and $q = 1/(1-\beta)$. Is the map $f \mapsto \big({\widetilde M}_{\beta}f\big)'$ continuous from $W^{1,1}(\mathbb{R})$ to $L^q(\mathbb{R})$?
\cite[Question E]{CMP}.

This is one of the main question of this paper. Here we give a positive answer to this question, which is the content of one of our main theorems below. 

\begin{theorem}
\label{Main Theorem}
Given $\beta\in(0,1)$, $q=\frac{1}{1-\beta}$. The map $f \mapsto \big({\widetilde M_{\beta}}f\big)'$ is continuous from $W^{1,1}(\mathbb{R})$ to $L^q(\mathbb{R})$.
\end{theorem}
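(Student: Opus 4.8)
The plan is to follow the strategy developed in \cite{CMP} for the case $\beta=0$, adapting each step to the fractional setting where the target is $L^q$ rather than $L^1$. The starting point is the known pointwise structure of $\widetilde M_\beta f$ for $f\in W^{1,1}(\mathbb R)$: on the ``disconnecting set'' $\{\widetilde M_\beta f > |f|\}$ one has an explicit description of $\widetilde M_\beta f$ in terms of the local maximizing balls, and $(\widetilde M_\beta f)'$ is controlled there by a variational formula analogous to the Aldaz--P\'erez L\'azaro bound, which in particular gives the a priori estimate $\|(\widetilde M_\beta f)'\|_{L^q} \lesssim \|f'\|_{L^1}$. First I would set up the decomposition $\mathbb R = \{\widetilde M_\beta f = |f|\} \cup U$, where $U$ is open, write $U = \bigcup_j (a_j,b_j)$ as a countable union of its connected components, and recall that on each component $\widetilde M_\beta f$ is given by a maximizing ball whose radius and center vary in a controlled (e.g.\ Lipschitz or monotone-type) way, so that the derivative on $U$ has a tractable form.

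The core of the argument is a stability/continuity statement: if $f_k \to f$ in $W^{1,1}(\mathbb R)$, then $(\widetilde M_\beta f_k)' \to (\widetilde M_\beta f)'$ in $L^q(\mathbb R)$. I would split the $L^q$-norm of the difference over three regions: (i) a region where both $f$ and $f_k$ agree with their maximal functions (there the derivative is just $|f|'$, and convergence follows from $f_k\to f$ in $W^{1,1}$ together with the chain rule $|f|' = \sgn(f) f'$ and the fact that $\sgn(f_k)f_k' \to \sgn(f)f'$ in $L^1$ on such sets); (ii) a good part of $U$ where the maximizing balls for $f_k$ are close to those for $f$ — here one uses continuity of the maximizing-ball data with respect to the function, exactly as Luiro's method handles $p>1$, to get pointwise a.e.\ convergence of the derivatives; and (iii) a small ``bad'' set whose contribution is made small by a uniform tail/equicontinuity estimate. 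To upgrade pointwise a.e.\ convergence on the good set to $L^q$ convergence I would invoke the Brezis--Lieb lemma (or a Vitali-type argument), using the a priori bound $\|(\widetilde M_\beta f_k)'\|_{L^q} \le C\|f_k'\|_{L^1} \to C\|f'\|_{L^1}$ to control the $L^q$ masses and reduce matters to showing no mass escapes, i.e.\ $\|(\widetilde M_\beta f_k)'\|_{L^q} \to \|(\widetilde M_\beta f)'\|_{L^q}$.

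The main obstacle, I expect, is precisely establishing that norm convergence $\|(\widetilde M_\beta f_k)'\|_{L^q} \to \|(\widetilde M_\beta f)'\|_{L^q}$ — equivalently ruling out concentration of $L^q$ mass of the derivative near the boundary $\partial U$ or at points where maximizing balls degenerate (radius $\to 0$ or $\to\infty$). One must show that the variational formula for $(\widetilde M_\beta f)'$ on $U$ is not merely bounded in $L^q$ but \emph{uniformly} $q$-equi-integrable along the sequence $f_k$; this is where the precise geometry of competing balls, the behavior of $r^\beta$ as $r\to 0^+$, and the interaction with the set where $\widetilde M_\beta f = |f|$ all enter, and it is more delicate than in the $\beta=0$, $L^1$ case because the $L^q$ norm is less forgiving of small sets on which the derivative is large. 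I would handle this by a careful case analysis near $\partial U$ (splitting into intervals where the maximizing radius is bounded below versus where it shrinks), combined with the semicontinuity $\widetilde M_\beta f \le \liminf \widetilde M_\beta f_k$ and the total-variation bound, to force the limit superior of the $L^q$ masses down to the right value. Once norm convergence plus a.e.\ convergence are in hand, Brezis--Lieb closes the argument.
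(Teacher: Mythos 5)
Your proposal correctly identifies the overall reduction (pointwise a.e.\ convergence of the derivatives plus Brezis--Lieb, so that everything hinges on showing $\|(\widetilde M_{\beta}f_k)'\|_{L^q}\to\|(\widetilde M_{\beta}f)'\|_{L^q}$) and correctly locates the main difficulty in ruling out escape of $L^q$ mass; this much agrees with the paper. But the route you propose for the crucial step has two genuine problems. First, the decomposition $\mathbb{R}=\{\widetilde M_{\beta}f=|f|\}\cup U$ with $(\widetilde M_{\beta}f)'=|f|'$ on the contact set is the $\beta=0$ structure and does not carry over: for $\beta>0$ one has $r^{\beta}\frac{1}{2r}\int_{B(x,r)}|f|\to 0$ as $r\to 0^{+}$, so there is no inequality $\widetilde M_{\beta}f\ge|f|$, no analogue of the Tanaka/Aldaz--P\'erez L\'azaro description of the detachment set, and the maximizing radii never degenerate to zero. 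The paper explicitly notes that the monotonicity lemmas underpinning the $\beta=0$ argument of \cite{CMP} have no fractional counterpart, so ``adapting each step'' of that proof is not an available strategy. Second, the equi-integrability/no-concentration step---which you rightly flag as the main obstacle---is only described (``careful case analysis near $\partial U$'', ``uniform tail estimate''), not carried out; the proposal stops exactly where the proof has to begin.

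The missing idea, which is what makes the fractional case work, is that $\beta>0$ forces a \emph{lower bound on the good radii}: since $\widetilde M_{\beta}f_j(x)=\frac{r^{\beta}}{2r}\int_{B}|f_j|\le r^{\beta}\,\widetilde M f_j(x)$, on any compact $K$ where $\widetilde M_{\beta}f\ge C_K>0$ and $\widetilde M f\le \overline C_K$ one gets $r_{j,x}\ge (C_K/3\overline C_K)^{1/\beta}$ uniformly for $j$ large and $x\in K$. Combined with the representation $(\widetilde M_{\beta}f_j)'(x)=\frac{r_{j,x}^{\beta}}{2r_{j,x}}\int_{B_{x,j}}|f_j|'$ this yields a uniform $L^{\infty}$ bound on the derivatives over $K$, so dominated convergence gives norm convergence on compacts. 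For the tails one fixes $y$ large and splits according to whether the good ball $B_{x,j}$ contains $y$: if not, the boundedness result of \cite{CaMa} applied to $f_j\chi_{[y,\infty)}$ controls that piece by $\|f_j'\chi_{[y,\infty)}\|_{L^1}^{q}$; if so, then $r_{x,j}\gtrsim y$ and $\widetilde M_{\beta}f_j$ is monotone on the set of such $x$, so a telescoping argument bounds the contribution by a negative power of $y$ times $\widetilde M_{\beta}f_j(4y)$, which is small. None of this appears in your plan, and without the radius lower bound---which is precisely where $\beta>0$ enters and why the paper's argument fails for $\beta=0$---the concentration issue you identify remains unresolved.
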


The boundedness of this operator was obtained by Carneiro and Madrid in \cite[Theorem 1]{CaMa} and it will play a fundamental role in the proof of Theorem \ref{Main Theorem}. It is important to point out that if we change the space $W^{1,1}$ for the slightly weaker space $BV(\mathbb{R})$ (the space of bounded variation functions in $\mathbb{R}$) the the operator is not continuous \cite[Theorem 3]{CMP}, although it is bounded. Then, in principle it was not clear to guess whether or not it was continuous in $W^{1,1}(\mathbb{R})$.


Theorem \ref{Main Theorem} is also true in the case $\beta=0$, it was established by Carneiro, Madrid and Pierce in \cite{CMP}, they used strongly the Tanaka's lemmas about the monotonicity of the lateral maximal operators, that was fundamental in their proof. Nothing similar has been proved for the fractional maximal operator, so new tools and ideas were needed. Here we present a different argument which allow us to get the Theorem \ref{Main Theorem} in the case $\beta>0$ (fractional case) extending the main theorem in \cite{CMP}, however this proof does not work in the case $\beta=0$. For  some related results we refer to \cite{BCHP}, \cite{CFS}, \cite{CaHu}, 
\cite{CaMo}, \cite{CaSv}, \cite{HM}, \cite{HO}, \cite{L}, \cite{L2} \cite{LM}, \cite{Ma}, \cite{R} and \cite{S}.

\subsection{Discrete setting} Given a function $f:\mathbb{Z} \to \mathbb{R}$. We will keep the usual notations
\begin{equation*}
\|f\|_{\ell^{p}{( \Z)}}:= \left(\sum_{n\in \Z} {|f(n)|^{p}}\right)^{1/p},
\end{equation*}
denotes its $\ell^p(\mathbb{Z})-$norm, for every $1\leq p<\infty$, and
\begin{equation*}
\|f\|_{\ell^{\infty}{(\Z)}}:= \sup_{n\in\Z}{|f(n)|}.
\end{equation*}
We define the derivative of $f$ at the point $n\in\Z$ by
$$
f'(n)=f(n+1)-f(n).
$$
We say that $f$ is a function of bounded variation if 
\begin{equation*}
Var(f) := \|f'\|_{l^1(\mathbb{Z})} = \sum_{n\in \Z} |f(n+1) - f(n)|<\infty.
\end{equation*}
We will denote by $BV(\Z)$ the space of functions of bounded variation, which is a Banach space with the norm 
\begin{equation}\label{BV norm discrete}
\|f\|_{{\rm BV(\Z)}} = \left|f(-\infty)\right| + Var(f),
\end{equation}
where $f(-\infty):= \lim_{n\to -\infty} f(n)$. 

\smallskip
The 
discrete centered Hardy-Littlewood maximal operator $M$ is defined by
\begin{equation}\label{disc_HLM}
{M}f(n) = \sup_{\stackrel{r \geq 0}{r \in \Z}} \frac{1}{(2r  +1)} \sum_{k = -r}^{r} |f(n + k)|.
\end{equation}
While 
the discrete uncentered Hardy-Littlewood maximal operator $\widetilde M$ is defined by
\begin{equation}\label{disc_HLM2}
{\widetilde M}f(n) = \sup_{\stackrel{r,s \geq 0}{r,s \in \Z}} \frac{1}{(r +s +1)} \sum_{k = -r}^{s} |f(n + k)|.
\end{equation}


\smallskip

 
 The boudedness of $\widetilde M: BV(\Z)\to BV(\Z)$ was established in \cite{BCHP}, and the continuity of this operator was recently established in \cite{CMP}. Although   boundedness of $M:BV(\Z)\to BV(\Z)$ was obtained by Temur in \cite{Te}, 
  the continuity of this operator was an open problem 
 \cite[Question D]{CMP} and it is the main result of this section ($M$ is less regular than $\widetilde M$ and usually is more complicated to treat).

\begin{theorem}
\label{Thm2}
The map ${M} :BV(\Z) \to BV(\Z)$ is continuous.
\end{theorem}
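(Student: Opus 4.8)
The plan is to follow the now-standard strategy for establishing continuity of a maximal operator on $BV$: since $M$ is already known to be bounded on $BV(\Z)$ by Temur \cite{Te}, it suffices to take a sequence $f_j \to f$ in $BV(\Z)$ and show $\|(Mf_j)' - (Mf)'\|_{\ell^1(\Z)} \to 0$; convergence of the $f(-\infty)$ terms is immediate since $BV(\Z)$ convergence forces $f_j(-\infty)\to f(-\infty)$. First I would record the elementary but crucial facts about the discrete setting: $f_j \to f$ in $BV(\Z)$ implies $f_j \to f$ uniformly on $\Z$ (indeed in $\ell^\infty$), hence $Mf_j \to Mf$ pointwise and uniformly, since $M$ is a sublinear operator with $\|Mg\|_{\ell^\infty}\le \|g\|_{\ell^\infty}$. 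The task is therefore to upgrade this to $\ell^1$ convergence of the \emph{derivatives}.

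The key decomposition is the classical one: because $(Mf_j)'(n)\to (Mf)'(n)$ for each fixed $n$ (a consequence of the uniform convergence $Mf_j\to Mf$, using that at each point only finitely many radii can be relevant up to small error once one controls tails), and because
\[
\sum_{n\in\Z}\big|(Mf_j)'(n)\big| = Var(Mf_j) \le Var(f_j) + (\text{correction}) \longrightarrow Var(Mf) \le \sum_{n\in\Z}\big|(Mf)'(n)\big|,
\]
one would like to invoke a Brezis--Lieb / Scheff\'e-type argument: pointwise convergence of the summands together with convergence of the sum of absolute values forces $\ell^1$ convergence. The honest difficulty, and the step I expect to be the main obstacle, is that $Var(Mf_j)$ need \emph{not} converge to $Var(Mf)$ — the boundedness inequality $Var(Mf)\le C\, Var(f)$ is not an equality and not known to be continuous — so the clean Scheff\'e argument does not apply directly. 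One must instead argue that no variation can "escape", i.e. rule out concentration of the variation of $Mf_j$ at spatial infinity and rule out oscillation building up at intermediate scales. Concretely I would fix $\eps>0$, choose $N$ large so that $\sum_{|n|>N}|(Mf)'(n)|<\eps$ and (using $Var(f_j)\to Var(f)$ together with a localized variant of Temur's bound applied to the tails of $f_j$) so that $\sum_{|n|>N}|(Mf_j)'(n)|<\eps$ uniformly in large $j$; then on the finite window $|n|\le N$ the pointwise convergence $(Mf_j)'(n)\to(Mf)'(n)$ gives convergence of the finite sum, and combining the three estimates closes the argument.

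To make the tail estimate rigorous — which is where the real work lies — I would analyze, for each $n$, the structure of the maximizing radius $r = r_j(n)$ for $Mf_j(n)$. Using that $f_j$ is uniformly bounded and has uniformly bounded variation, one shows the maximizing radii cannot be too large where $|f_j|$ is small (a "detail lemma" of the type appearing in \cite{Te} and \cite{CMP}), so that for $|n|$ large the value $Mf_j(n)$, and more importantly the increments $Mf_j(n+1)-Mf_j(n)$, are controlled by the local oscillation of $f_j$ near $n$ together with a genuinely decaying contribution from large averaging windows; summing in $n$ over $|n|>N$ and using $\sum_{|n|>N}|f_j'(n)|\to \sum_{|n|>N}|f'(n)|$ (hence small, uniformly in $j$, for $N$ large) yields the required uniform smallness of the tail variation. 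An additional subtlety is the possibility that on the window $|n|\le N$ the maximizing radii $r_j(n)$ fail to stabilize; here I would pass to a subsequence along which, for each of the finitely many $n$ with $|n|\le N$, either $r_j(n)$ is eventually constant or $r_j(n)\to\infty$, handle the bounded case by direct continuity of finite averages and the unbounded case by comparison with the global average of $|f_j|$, and finally use the standard subsequence principle (every subsequence has a further subsequence converging to the same limit) to conclude convergence of the full sequence. Assembling the finite-window convergence with the uniform tail bound gives $\|(Mf_j)'-(Mf)'\|_{\ell^1}\to 0$, completing the proof.
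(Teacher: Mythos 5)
Your overall architecture matches the paper's: reduce via Fatou and Brezis--Lieb to showing $\limsup_j Var(Mf_j)\le Var(Mf)$, split $\Z$ into a finite window $[-k,k]$ (where uniform convergence of $Mf_j$, hence of the discrete derivatives, handles everything) and two tails, and then prove a tail bound $\sum_{|n|>k}|(Mf_j)'(n)|\lesssim \delta+\epsilon$ uniformly in large $j$. The paper even makes the same first move on the tail that you do: the points $n\in[k,\infty)$ whose maximizing radius satisfies $n-r_n\ge k$ are handled by applying Temur's boundedness to $f_j\chi_{[k,\infty)}$.

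The genuine gap is in the remaining case, $n-r_n<k$, which is exactly where your sketch goes wrong. You propose to show that ``the maximizing radii cannot be too large where $|f_j|$ is small,'' but for the \emph{centered} operator this is false: if $f$ is concentrated near the origin, then for large $n$ the supremum defining $Mf(n)$ is attained (or nearly attained) at $r\approx n$, reaching all the way back to the mass of $f$. This is not a removable technicality --- it is the reason the centered case was open while the uncentered case was known, since for $\widetilde M$ one can use one-sided intervals and monotonicity. The paper's actual mechanism for this case is a Kurka-type lemma (Lemma \ref{existencia de s}): for each local minimum/maximum pair $[b_{i_+},a_{i_+}]$ of $Mf_j$ whose maximizing radius at $a_{i_+}$ reaches back past $k$ and where $Mf_j\ne f_j$ on the maximum, there is a point $s$ in the averaging window with $|f_j(s)|\ge Mf_j(b_+)+\frac{Mf_j(a_+)-Mf_j(b_+)}{2(a_+-b_+)}(2r_{a_+}+1)$, so the rise can be charged to $Var(f_j)$; one then selects greedily the pair with the largest slope, charges all pairs inside its (long) averaging window at once, and iterates, with only consecutive windows overlapping. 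Summing these charges gives the uniform tail bound. Nothing in your proposal substitutes for this lemma and the induction, and without it the pointwise convergence plus finite-window argument cannot rule out variation of $Mf_j$ accumulating in the tail through long-range averages. (A secondary, minor inaccuracy: the strategy does in the end prove $Var(Mf_j)\to Var(Mf)$, so your concern that Scheff\'e/Brezis--Lieb is unavailable is misplaced; the issue is not the applicability of that step but the proof of the $\limsup$ inequality feeding into it.)
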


Taking in considerations the results obtained in this paper, the situation of the {{endpoint continuity program for maximal operators}} is the following.


\begin{table}[h]
\renewcommand{\arraystretch}{1.3}
\centering
\caption{Endpoint continuity program}
\label{Table-ECP}
\begin{tabular}{|c|c|c|c|c|}
\hline
 \raisebox{-1.3\height}[0.5cm]{{------------}}&  \parbox[t]{2.4cm}{  $W^{1,1}-$\tiny{continuity}; \\ continuous setting} &  \parbox[t]{2.4cm}{  $BV-$\tiny{continuity}; \\ continuous setting} & \parbox[t]{2.4cm}{  $W^{1,1}-$\tiny{continuity}; \\ discrete setting} &  \parbox[t]{2.2cm}{  $BV-$\tiny{continuity}; \\ discrete setting} \\ [0.5cm]
\hline
 \parbox[t]{3.3cm}{ {\tiny{Centered classical}} \\ \tiny{maximal operator}} &  \raisebox{-0.8\height}{OPEN} & \raisebox{-0.8\height}{OPEN} & \raisebox{-0.8\height}{YES$^2$}  &\raisebox{-0.8\height}{{YES: Theorem \ref{Thm2}}}\\[0.5cm]
 \hline
  \parbox[t]{3.3cm}{ \tiny{Uncentered classical} \\ \tiny{maximal operator}} &  \raisebox{-0.8\height}{YES$^4$} &  \raisebox{-0.8\height}{OPEN} &  \raisebox{-0.8\height}{YES$^2$} &  \raisebox{-0.8\height}{YES$^4$} \\[0.5cm]
 \hline
 \parbox[t]{3.3cm}{ \tiny{Centered fractional} \\ \tiny{maximal operator}} &  \raisebox{-0.8\height}{OPEN$^1$} &  \raisebox{-0.8\height}{NO$^{1,}$$^4$} &  \raisebox{-0.8\height}{YES$^3$} & \raisebox{-0.8\height}{NO$^{1,}$$^4$}  \\[0.5cm]
 \hline
\parbox[t]{3.3cm}{ \tiny{Uncentered fractional} \\ \tiny{maximal operator}} &  \raisebox{-0.8\height}{YES: Theorem \ref{Main Theorem}} &  \raisebox{-0.8\height}{NO $^4$} &  \raisebox{-0.8\height}{YES$^3$}  & \raisebox{-0.8\height}{NO$^4$} \\[0.5cm]
 \hline
\end{tabular}
\vspace{0.05cm}
\flushleft{
\ \ \footnotesize{$^1$ Corresponding boundedness result not yet known.}\\
\ \ $^2$ Result proved in \cite{CaHu}.\\
\ \ $^3$ Result proved in \cite{CaMa}.\\
\ \ $^4$ Result proved in \cite{CMP}}.
\end{table}

In the table above the word YES means that the continuity was estabished, the word NO means that there are counterexamples to the continuity, and the word OPEN means that the problem is still unsolved.


\section{Continuous setting -- Preliminaries}\label{sec2}


Through out this paper we will use the following notations. Given a function $f\in W^{1,1}(\mathbb{R})$, $f'$ denotes its weak derivative. For every $p\in[1,\infty]$ we denote by $\|f\|_{p}$ the usual norm in $L^{p}(\mathbb{R})$.\\

Fix $\beta\in(0,1)$. If $f$ is a function in $W^{1,1}(\mathbb{R})$, given a point $x\in\mathbb{R}$, we say that an interval $B$ is a {\it{good ball for $x$ with respect to $f$}} if $x\in B$ and
$$
\widetilde M_{\beta}f(x)=\frac{r^{\beta}}{2r}\int_{B}|f|=r^{\beta}\intav_{B}|f|.
$$
Here $r$ denotes the radius of $B$. The condition $f\in L^{1}(\mathbb{R})$ implies that for every $x\in\mathbb{R}$ there is at least one good ball.

Given a function $f$ in $W^{1,1}(\mathbb{R})$ and a sequence of functions $\{f_{j}\}_{j\in\mathbb{N}}\subset W^{1,1}(\mathbb{R})$, for every $x\in \mathbb{R}$ we denote by $B_{x}$ and $B_{x,j}$ a family of good balls for $x$ with respect to $f$ and $f_{j}$ respectively, and we denote by $r_{x}$ and $r_{x,j}$ the radius of these balls, we call any of these {\it{good radii}} of $f$ (or $f_j$ respectively) at the point $x$.
We denote by $\chi_{S}$ the characteristic function of a set $S\subset\mathbb{R}$.


The following is a basic fact, it will be useful along this paper, it is the content of Lemma 14 in \cite{CMP}.
\begin{lemma}\label{pass_modulus}
Let $f\in W^{1,1}(\RR)$ and $\{f_{j}\}_{j \geq 1}\subset W^{1,1}(\RR)$ be such that $\|f_{j}-f\|_{W^{1,1}(\RR)}\to0$ as $j\to\infty$. Then $\||f_{j}|-|f|\|_{W^{1,1}(\RR)}\to0$ as $j\to\infty$.
\end{lemma}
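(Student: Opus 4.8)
The plan is to reduce everything to the scalar statement $\||f_j|-|f|\|_{W^{1,1}(\RR)}\to 0$ by splitting into the $L^1$ part and the derivative part, and handle each separately.

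First I would recall that for $g\in W^{1,1}(\RR)$ one has $|g|\in W^{1,1}(\RR)$ with $|g|'=\sgn(g)\,g'$ almost everywhere (more precisely, $|g|' = g'$ a.e.\ on $\{g>0\}$, $|g|'=-g'$ a.e.\ on $\{g<0\}$, and $|g|'=0=g'$ a.e.\ on $\{g=0\}$). The convergence of the $L^1$ parts, $\||f_j|-|f|\|_1\to 0$, is immediate from the reverse triangle inequality $\big||f_j|-|f|\big|\le |f_j-f|$ pointwise, so $\||f_j|-|f|\|_1 \le \|f_j-f\|_1\to 0$. The real content is the derivative part: I must show $\|\,|f_j|'-|f|'\,\|_1\to 0$, i.e.\ $\|\sgn(f_j)f_j' - \sgn(f)f'\|_1\to 0$.

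For that, I would write
\[
\sgn(f_j)f_j' - \sgn(f)f' = \sgn(f_j)\big(f_j'-f'\big) + \big(\sgn(f_j)-\sgn(f)\big)f'.
\]
The first term has $L^1$ norm at most $\|f_j'-f'\|_1\to 0$. For the second term, $\big|\sgn(f_j)-\sgn(f)\big||f'| \le 2|f'|\in L^1$, so by dominated convergence it suffices to show that, along a subsequence, $\sgn(f_j)\to\sgn(f)$ a.e.\ on the set $\{f'\ne 0\}$ (and then a standard subsequence-of-subsequence argument upgrades this to convergence of the whole sequence of norms to $0$). Passing to a subsequence so that $f_j\to f$ pointwise a.e., at a point $x$ with $f(x)\ne 0$ we get $\sgn(f_j(x))\to\sgn(f(x))$ for $j$ large; the only problematic points are where $f(x)=0$ but $f'(x)\ne 0$, and a classical fact is that $f'=0$ a.e.\ on the level set $\{f=0\}$ for $f\in W^{1,1}$ (indeed $\{f=0\}$ has measure-theoretic structure making $\nabla f=0$ a.e.\ there). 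Hence $\{f=0\}\cap\{f'\ne 0\}$ is null, so $\sgn(f_j)\to\sgn(f)$ a.e.\ on $\{f'\ne 0\}$, which is all that is needed.

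The main obstacle is really just the second term above: the map $g\mapsto \sgn(g)$ is not continuous, so one cannot argue by naive continuity, and one must exploit both the pointwise a.e.\ convergence (after passing to a subsequence) and the fact that $f'$ vanishes a.e.\ on $\{f=0\}$ to kill the contribution of the bad set. Since this is precisely Lemma 14 of \cite{CMP}, I would in practice just cite it; but the sketch above is the argument one would reconstruct. I would close by noting that combining the two parts gives $\||f_j|-|f|\|_{W^{1,1}(\RR)} = \||f_j|-|f|\|_1 + \|\,|f_j|'-|f|'\,\|_1 \to 0$, completing the proof.
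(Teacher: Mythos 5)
Your proposal is correct, and in fact the paper offers no proof of this lemma at all: it simply cites Lemma 14 of \cite{CMP}. Your reconstruction (reverse triangle inequality for the $L^1$ part; the chain rule $|g|'=\sgn(g)g'$, the splitting $\sgn(f_j)(f_j'-f')+(\sgn(f_j)-\sgn(f))f'$, dominated convergence along a pointwise a.e.\ convergent subsequence, and the fact that $f'=0$ a.e.\ on $\{f=0\}$) is the standard argument and is essentially the one given in that reference, so there is nothing to add.
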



Another very useful observation is the following.
\begin{lemma}\label{unif conv lemma}
Given a function $f\in W^{1,1}(\mathbb{R})$ and a sequence $\{f_{j}\}_{j\in\mathbb{N}}\subset W^{1,1}(\mathbb{R})$ such that $\|f_{j}-f\|_{W^{1,1}(\mathbb{R})}\to0$ as $j\to\infty$. For every $\beta\in [0,1)$ we have that 
$$
\|\widetilde M_{\beta}f_{j}-\widetilde M_{\beta}f\|_{\infty}\to 0\ \ \text{as} \ \ j\to \infty.
$$
\end{lemma}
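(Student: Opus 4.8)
The plan is to prove the uniform convergence $\|\widetilde M_\beta f_j - \widetilde M_\beta f\|_\infty \to 0$ by a direct sublinearity-type estimate, pushing the difference through the supremum defining $\widetilde M_\beta$.

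First I would reduce to nonnegative functions: by Lemma~\ref{pass_modulus} we have $\||f_j| - |f|\|_{W^{1,1}(\RR)} \to 0$, and since $\widetilde M_\beta g = \widetilde M_\beta |g|$ for all $g$, it suffices to prove the statement assuming $f, f_j \geq 0$ with $\|f_j - f\|_{W^{1,1}} \to 0$; relabel so that this holds. The key pointwise inequality is the sublinearity of the supremum: for every $x \in \RR$,
\[
\widetilde M_\beta f_j(x) - \widetilde M_\beta f(x) \leq \sup_{B(z,r)\ni x} r^\beta \intav_{B(z,r)} |f_j - f| = \widetilde M_\beta(f_j - f)(x),
\]
and symmetrically with the roles of $f_j$ and $f$ exchanged, so that
\[
\|\widetilde M_\beta f_j - \widetilde M_\beta f\|_\infty \leq \|\widetilde M_\beta(f_j - f)\|_\infty.
\]
Thus it remains to show that $\|\widetilde M_\beta g_j\|_\infty \to 0$ whenever $g_j := f_j - f$ satisfies $\|g_j\|_{W^{1,1}(\RR)} \to 0$.

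The heart of the matter is therefore the estimate $\|\widetilde M_\beta g\|_\infty \lesssim \|g\|_{W^{1,1}(\RR)}$ (with a constant depending only on $\beta$). I would prove this by splitting the supremum over balls $B(z,r) \ni x$ into small radii and large radii. For $r \leq 1$: since $g \in W^{1,1}(\RR) \subset L^\infty(\RR)$ with $\|g\|_\infty \leq \tfrac12\|g'\|_1 \leq \|g\|_{W^{1,1}}$, we get $r^\beta \intav_{B(z,r)} |g| \leq r^\beta \|g\|_\infty \leq \|g\|_{W^{1,1}}$. For $r \geq 1$: here $r^\beta \intav_{B(z,r)} |g| \leq \tfrac{r^\beta}{2r}\|g\|_1 = \tfrac12 r^{\beta - 1}\|g\|_1 \leq \tfrac12 \|g\|_1 \leq \tfrac12\|g\|_{W^{1,1}}$, using $\beta < 1$. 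Taking the supremum over all balls through $x$ and then over $x$ yields $\|\widetilde M_\beta g\|_\infty \leq \|g\|_{W^{1,1}(\RR)}$, and applying this to $g = g_j$ finishes the proof.

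I do not expect a serious obstacle here; the only point that requires a little care is the embedding $W^{1,1}(\RR) \hookrightarrow L^\infty(\RR)$ (valid in one dimension because an absolutely continuous representative with integrable derivative and integrable values must be bounded, indeed vanish at $\pm\infty$), together with making sure the split at $r = 1$ genuinely uses $\beta \in [0,1)$ — the case $\beta = 1$ would fail in the large-radius regime, which is consistent with the restriction in the statement. Everything else is the routine sublinearity argument above.
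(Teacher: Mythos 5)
Your proof is correct, and it reaches the same two milestones as the paper's proof --- the sublinearity bound $\|\widetilde M_{\beta}f_{j}-\widetilde M_{\beta}f\|_{\infty}\leq\|\widetilde M_{\beta}(f_{j}-f)\|_{\infty}$ followed by an estimate of $\|\widetilde M_{\beta}g\|_{\infty}$ in terms of $\|g\|_{1}$ and $\|g\|_{\infty}$ combined with the one-dimensional embedding $W^{1,1}(\RR)\hookrightarrow L^{\infty}(\RR)$ --- but it executes the middle step differently. The paper passes through the intermediate space $L^{q'}(\RR)$ with $q'=1/\beta$: by H\"older's inequality $\widetilde M_{\beta}$ maps $L^{q'}$ to $L^{\infty}$ (this is exactly the exponent for which $r^{\beta}|B|^{1/q-1}$ is scale-invariant), and then the log-convexity bound $\|g\|_{q'}\leq\|g\|_{\infty}^{1/q}\|g\|_{1}^{1/q'}$ brings in the $W^{1,1}$ norm. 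You instead perform this interpolation ``by hand'' at the level of the defining supremum, splitting into radii $r\leq 1$ (controlled by $\|g\|_{\infty}\leq\tfrac12\|g'\|_{1}$) and $r\geq 1$ (controlled by $r^{\beta-1}\|g\|_{1}$). Your route is more elementary and self-contained, at the cost of a case split; the paper's is shorter but quotes the $L^{q'}\to L^{\infty}$ mapping property. Two minor remarks: the preliminary reduction to nonnegative functions via Lemma \ref{pass_modulus} is unnecessary, since $\big||f_{j}|-|f|\big|\leq|f_{j}-f|$ already gives the sublinearity inequality directly; and your parenthetical claim that the large-radius estimate ``would fail'' for $\beta=1$ is not quite right --- there $r^{\beta-1}=1$, so the bound $\tfrac12\|g\|_{1}$ survives as an endpoint case (the restriction $\beta<1$ in the statement comes from the role of $q=1/(1-\beta)$ elsewhere, not from this lemma). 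Neither point affects the validity of your argument for $\beta\in[0,1)$.
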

\begin{proof}[Proof of Lemma \ref{unif conv lemma}]
By H\"older's Inequality we have 
\begin{eqnarray*}
&&\|\widetilde M_{\beta}f_{j}-\widetilde M_{\beta}f\|_{\infty}\\
&&\leq C\|f_{j}-f\|_{q'}
\leq C\|f_{j}-f\|^{1/q}_{\infty}\|f_{j}-f\|_{1}^{1/q'}
\leq C\|f_{j}-f\|_{W^{1,1}(\mathbb{R})}.
\end{eqnarray*}
Here $q'=\frac{q}{q-1}=\frac{1}{\beta}$ and $C>0$ is a universal constant.
\end{proof}

Before proceeding we recall the notion of approximately differentiable function.\\
\noindent
A function $f: \mathbb{R} \to \mathbb{R}$ is said to be {\it approximately differentiable} at a point $x_0\in \mathbb{R}$ if there exists a real number $\alpha$ such that, for any $\varepsilon >0$, the set
$$A_{\varepsilon} = \left\{ x \in \mathbb{R}; \ \frac{|f(x) - f(x_0) - \alpha(x- x_0)|}{|x-x_0|} < \varepsilon \right\}$$
has $x_0$ as a density point. In this case, the number $\alpha$ is called the approximate derivative of $f$ at $x_0$ and it is uniquely determined. It follow directly from the definitions that if $f$ is differentiable at $x_0$ then it is approximately differentiable at $x_0$, and the classical and approximate derivatives coincide.

\begin{lemma}\label{representacion de la derivada}
Given a function $f\in W^{1,1}(\mathbb{R})$ for almost every $x\in\mathbb{R}$ we have that
\begin{equation}\label{rep de derivada}
(\widetilde M_{\beta}f)'(x)=r^{\beta}_{x}\intav_{B_{x}}|f|'(y)dy.
\end{equation}
\end{lemma}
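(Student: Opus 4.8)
The plan is to establish the formula $(\widetilde M_\beta f)'(x) = r_x^\beta \intav_{B_x} |f|'(y)\,dy$ at points where everything is regular, and then argue that these cover a set of full measure. First I would reduce to the case $f \geq 0$: since $\widetilde M_\beta f = \widetilde M_\beta |f|$ and, by Lemma \ref{pass_modulus}, passing to the modulus is harmless, and since $|f|'$ agrees a.e.\ with $\sgn(f) f'$, it suffices to prove the identity for nonnegative $f$, where $|f|' = f'$. So assume $f \geq 0$.

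Next, recall that $\widetilde M_\beta f$ is known to be in $W^{1,1}_{\mathrm{loc}}$ (indeed the relevant endpoint boundedness of Carneiro--Madrid \cite{CaMa} gives $(\widetilde M_\beta f)' \in L^q$), hence $\widetilde M_\beta f$ is absolutely continuous and differentiable a.e.; fix a point $x_0$ at which $\widetilde M_\beta f$ is differentiable. Let $B_{x_0} = B(z,r)$ be a good ball for $x_0$, so $\widetilde M_\beta f(x_0) = r^\beta \intav_{B(z,r)} f$. For $x$ near $x_0$ the same ball $B(z,r)$ is admissible in the supremum defining $\widetilde M_\beta f(x)$ (as long as $x \in B(z,r)$, which holds for $x$ in a neighborhood of $x_0$ when $x_0$ is interior to $B_{x_0}$; the boundary case where $x_0$ is an endpoint of every good ball must be handled separately or shown to be negligible), so
$$
\widetilde M_\beta f(x) \geq r^\beta \intav_{B(z,r)} f = \widetilde M_\beta f(x_0),
$$
i.e.\ $x_0$ is a minimum of $\widetilde M_\beta f$ restricted to that neighborhood? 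No — rather, the function $x \mapsto r^\beta \intav_{B(z,r)} f$ is a fixed constant (independent of $x$), so this only gives $\widetilde M_\beta f(x) \geq \widetilde M_\beta f(x_0)$ near $x_0$, forcing $(\widetilde M_\beta f)'(x_0) = 0$ whenever the good ball can be kept fixed on both sides. To get the stated formula I instead need to let the ball move: I would parametrize good balls $B(z(x), r(x))$ for $x$ near $x_0$, use that $x \mapsto \widetilde M_\beta f(x) = r(x)^\beta \intav_{B(z(x),r(x))} f$ is the envelope, differentiate the inner expression treating $r, z$ as functions of $x$, and invoke the envelope theorem: the derivative of the envelope equals the partial derivative of $r^\beta \intav_{B(z,r)} f$ with respect to the explicit appearance of $x$. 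But $x$ appears in $r^\beta \intav_{B(z,r)} f$ only through the constraint $x \in B(z,r)$, which at an interior good ball is inactive, again giving $0$; the formula $r_x^\beta \intav_{B_x} |f|'$ must therefore come from the alternative description where a good ball with $x$ at its endpoint is used. Concretely, writing a good ball as $B(x+r, r)$ or $B(x-r,r)$ (left/right good balls, when $x_0$ sits at the boundary), one has $\widetilde M_\beta f(x) = r^\beta \frac{1}{2r}\int_x^{x+2r} f(t)\,dt$ with $r = r(x)$, and differentiating in $x$ via the fundamental theorem of calculus, the terms involving $r'(x)$ cancel by the first-order optimality of $r$, leaving
$$
(\widetilde M_\beta f)'(x) = r^\beta \frac{f(x+2r) - f(x)}{2r} = r^\beta \intav_{B} f' = r_x^\beta \intav_{B_x} f'.
$$

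The cleanest route, and the one I would actually write, is to split $\mathbb{R}$ (up to measure zero) into the set where $\widetilde M_\beta f = |f|$ — there $(\widetilde M_\beta f)' = |f|'$ a.e.\ and the degenerate ball $r_x = 0$... no, $r=0$ is not allowed; rather on that set the good balls shrink and a limiting/approximate-differentiability argument identifies $(\widetilde M_\beta f)'(x)$ with $|f|'(x)$, consistent with formula \eqref{rep de derivada} in the limit — and the open ``detachment set'' where $\widetilde M_\beta f > |f|$, on each component of which $\widetilde M_\beta f$ is smooth enough that one can select good radii varying nicely and run the optimality-cancellation computation above. I would use approximate differentiability (recalled just before the lemma) to handle points in the first set: at a point of approximate continuity of $f$ that is also a Lebesgue point of $f'$, shrinking good balls give $r_x^\beta \intav_{B_x} |f|' \to $ the approximate derivative. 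The main obstacle is precisely the bookkeeping on the detachment set: showing that good radii can be chosen measurably and locally Lipschitz (or at least differentiably) in $x$, and verifying the cancellation of the $r'(x)$-terms rigorously from the fact that $r_x$ maximizes $r \mapsto r^{\beta-1}\int_x^{x+2r} f$; this is where the condition $f \in W^{1,1}$ (so that $\int_x^{x+2r} f$ is differentiable in both $x$ and $r$ with the expected formulas) is essential. Everything else — the reduction to $f\ge 0$, the a.e.\ differentiability of $\widetilde M_\beta f$, and the treatment of the contact set — is routine given the tools already assembled in the excerpt.
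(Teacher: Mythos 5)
There is a genuine gap: the one idea that actually proves \eqref{rep de derivada} is missing from your proposal. The standard argument (Luiro \cite{L}, Haj\l asz--Mal\'y \cite{HM}, which is what the paper cites) is to \emph{translate} the good ball together with the point: if $B_x=B(z,r)$ is good for $x$, then $x+h\in B(z+h,r)$, so
$$\widetilde M_\beta f(x+h)-\widetilde M_\beta f(x)\ \geq\ r^\beta\intav_{B(z,r)}\big(|f|(y+h)-|f|(y)\big)\,dy = r^{\beta}\intav_{B(z,r)}\int_{0}^{h}|f|'(y+t)\,dt\,dy .$$
Dividing by $h$ and letting $h\to0^{+}$ and $h\to0^{-}$ (the inequality reverses for $h<0$) squeezes $(\widetilde M_\beta f)'(x)$ between the same quantity $r_x^\beta\intav_{B_x}|f|'$ at every point where the classical derivative exists --- and a.e.\ existence follows from the absolute continuity of $\widetilde M_\beta f$ \cite{CaMa}. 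This requires no regularity whatsoever of the map $x\mapsto r_x$. Your route instead keeps the ball fixed (which, as you notice, only yields $(\widetilde M_\beta f)'(x_0)=0$ at points interior to a good ball, and even then you would still have to check that $\intav_{B_{x_0}}|f|'=0$ there for consistency with the formula), and then falls back on an envelope-theorem computation that needs the good radius to vary differentiably in $x$ and the $r'(x)$-terms to cancel by first-order optimality. That selection/regularity issue is precisely what you flag as ``the main obstacle'' and leave unresolved; it is not a bookkeeping matter but the whole difficulty, and it is exactly what the translation trick is designed to avoid.

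A secondary point: your decomposition into a contact set $\{\widetilde M_\beta f=|f|\}$ with shrinking good balls and a detachment set is imported from the $\beta=0$ theory and does not fit the fractional case. For $\beta>0$ and $f\in W^{1,1}(\mathbb{R})\subset L^\infty(\mathbb{R})$ one has $r^\beta\intav_{B}|f|\leq r^\beta\|f\|_\infty\to0$ as $r\to0$, so wherever $\widetilde M_\beta f(x)>0$ the good radii are bounded below by a positive constant; good balls never degenerate and no limiting identification of $(\widetilde M_\beta f)'$ with $|f|'$ on a contact set is needed (this lower bound on $r_x$ is in fact used quantitatively in Lemma \ref{compact lemma}). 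The paper's own proof is a two-line citation: \cite[Theorems 1 and 2]{HM} gives approximate differentiability a.e.\ with approximate derivative equal to the right-hand side of \eqref{rep de derivada}, and \cite[Theorem 1]{CaMa} gives absolute continuity, hence a.e.\ classical differentiability, so the two derivatives agree a.e.
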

\begin{proof}[Proof of Lemma \ref{representacion de la derivada}]
Following the argument of Haj\l asz and Mal\'{y} \cite[Theorems 1 and 2]{HM} we see that if a function $f \in W^{1,1}(\mathbb{R})$ thus $\widetilde M_{\beta}f$ is approximately differentiable a.e. and the approximate derivative is equal to the right hand side of \eqref{rep de derivada} for almost every $x \in \mathbb{R}$. (and in this case for every good ball $B_x$ with good radius $r_x$). 
We can conlude using the fact that $\widetilde M_{\beta}f$ is absolutely continuous and therefore it is differentiable almost everywhere in the classical sense \cite[Theorem 1]{CaMa}.
\end{proof}


\begin{lemma}\label{convergencia puntual donde discolan}
Let $f\in W^{1,1}(\mathbb{R})$ and $\{f_{j}\}_{j \in\mathbb{N}}\subset W^{1,1}(\mathbb{R})$ be such that $\|f_{j}-f\|_{W^{1,1}(\mathbb{R})}\to0$ as $j\to\infty$. Then
$$
(\widetilde M_{\beta}f_{j})'(x)\to (\widetilde M_{\beta}f)'(x) 
$$
for almost every $x\in \mathbb{R}$.
\end{lemma}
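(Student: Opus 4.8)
The plan is to show that along a.e.\ point $x$, the good radii $r_{x,j}$ of $f_j$ stay in a compact subset of $(0,\infty)$ and any subsequential limit is a good radius for $f$, then pass to the limit in the representation formula \eqref{rep de derivada}. First I would fix a point $x$ at which \eqref{rep de derivada} holds for $f$ and, simultaneously, for all the $f_j$ (a full-measure set, being a countable intersection). Using Lemma \ref{unif conv lemma}, $\widetilde M_\beta f_j(x) \to \widetilde M_\beta f(x)$, and since $\widetilde M_\beta f(x) > 0$ (as $f$ is not a.e.\ zero — the trivial case being handled separately), the quantities $\widetilde M_\beta f_j(x)$ are bounded above and below away from $0$ for $j$ large. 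The defining identity $\widetilde M_\beta f_j(x) = r_{x,j}^{\beta}\,\intav_{B_{x,j}}|f_j|$ together with $r^\beta \intav_{B}|f_j| \le r^{\beta - 1}\|f_j\|_1 / 2$ forces $r_{x,j}$ to be bounded above (since $\beta < 1$ makes $r^{\beta-1}\to 0$ as $r\to\infty$, uniformly in $j$ once $\|f_j\|_1$ is bounded), and a lower bound on $r_{x,j}$ follows because $r^\beta \intav_{B}|f_j| \le r^\beta \|f_j\|_\infty \to 0$ as $r \to 0$ — here one uses that $f_j \to f$ in $W^{1,1}$ implies $\|f_j\|_\infty$ is bounded. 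So the good radii $r_{x,j}$ lie in a fixed interval $[a,b]\subset(0,\infty)$.

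Next, given any subsequence, I would extract a further subsequence along which $r_{x,j}\to r_\infty \in [a,b]$ and the corresponding good balls $B_{x,j}$ converge (in the Hausdorff sense, i.e.\ centers and radii converge) to some ball $B_\infty\ni x$ of radius $r_\infty$. Passing to the limit in $\widetilde M_\beta f_j(x) = r_{x,j}^\beta \intav_{B_{x,j}}|f_j|$, using $L^1$-convergence $|f_j|\to|f|$ (Lemma \ref{pass_modulus}) and the uniform convergence of Lemma \ref{unif conv lemma}, one gets $\widetilde M_\beta f(x) = r_\infty^\beta \intav_{B_\infty}|f|$; hence $B_\infty$ is a good ball for $x$ with respect to $f$, and $r_\infty$ is a good radius. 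By Lemma \ref{representacion de la derivada} applied at this good ball, $(\widetilde M_\beta f)'(x) = r_\infty^\beta \intav_{B_\infty}|f|'$. Finally, along the same subsequence, $(\widetilde M_\beta f_j)'(x) = r_{x,j}^\beta \intav_{B_{x,j}}|f_j|' \to r_\infty^\beta \intav_{B_\infty}|f|'$, because $|f_j|' \to |f|'$ in $L^1$ (Lemma \ref{pass_modulus}) and the averages are over intervals converging to $B_\infty$. Thus every subsequence has a further subsequence along which $(\widetilde M_\beta f_j)'(x) \to (\widetilde M_\beta f)'(x)$, which gives convergence of the full sequence at $x$.

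The main obstacle is the convergence of the averaged derivatives $\intav_{B_{x,j}}|f_j|'\to \intav_{B_\infty}|f|'$: one must simultaneously handle the moving domain of integration and the changing integrand. This is managed by writing the difference as $\intav_{B_{x,j}}\big(|f_j|' - |f|'\big) + \big(\intav_{B_{x,j}}|f|' - \intav_{B_\infty}|f|'\big)$; the first term is bounded by $\tfrac{1}{2a}\||f_j|'-|f|'\|_1 \to 0$ using the uniform lower bound $r_{x,j}\ge a$, and the second term tends to $0$ by absolute continuity of the integral of $|f|'$ together with the convergence of the intervals $B_{x,j}\to B_\infty$. A secondary point requiring care is justifying that the good balls themselves (not merely the radii) can be taken to converge; this uses that the centers $z_{x,j}$ are confined to the bounded set $\{z : |z-x|\le r_{x,j}\le b\}$, so Bolzano--Weierstrass applies. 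The degenerate case $f \equiv 0$ a.e.\ (so $\widetilde M_\beta f \equiv 0$) is trivial and treated at the outset.
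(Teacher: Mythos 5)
Your argument is correct, and it is essentially the paper's approach: the paper's proof is a one-line citation to \cite[Lemma 15]{CMP}, whose argument is precisely the compactness-of-good-radii scheme you carry out (uniform upper and lower bounds on $r_{x,j}$ from the $L^1$ and $L^\infty$ bounds, subsequential convergence of good balls to a good ball for $f$, and passage to the limit in the representation formula of Lemma \ref{representacion de la derivada}). Your write-up simply supplies the details that the paper delegates to that reference.
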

\begin{proof}[Proof of Lemma \ref{convergencia puntual donde discolan}]
Using Lemma \ref{representacion de la derivada} and an argument of Carneiro, Madrid and Pierce shows the result \cite[Lemma 15]{CMP}.
\end{proof}

{\remark \label{remark1}
As a consequence of Lemma \ref{convergencia puntual donde discolan} and the Brezis-Lieb Lemma \cite{BL}, in order to obtain our main theorem it is sufficient to prove that
$$
\int_{\mathbb{R}}|(\widetilde M_{\beta}f_{j})'|^{q}\to \int_{\mathbb{R}}|(\widetilde M_{\beta}f)'|^{q} \ \ \text{as}\ \ j\to\infty.
$$}

{We start by analyzing the situation inside of a given compact.}
\begin{lemma}\label{compact lemma}
Given a function $f\in W^{1,1}(\mathbb{R})$ and a sequence $\{f_{j}\}_{j\in\mathbb{N}}\subset W^{1,1}(\mathbb{R})$ such that $\|f_{j}-f\|_{W^{1,1}(\mathbb{R})}\to0$ as $j\to\infty$. Given a compact $K\subset \mathbb{R}$ we have that 
$$
\int_{K}|(\widetilde M_{\beta}f_{j})'|^{q}\to\int_{K}|(\widetilde M_{\beta}f)'|^{q} \ \ \text{as} \ \ j\to\infty.
$$ 
\end{lemma}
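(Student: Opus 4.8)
The plan is to deduce the statement from the dominated convergence theorem, combining the almost everywhere convergence $(\widetilde M_{\beta}f_{j})'\to (\widetilde M_{\beta}f)'$ supplied by Lemma \ref{convergencia puntual donde discolan} with a bound for $|(\widetilde M_{\beta}f_{j})'|$ on $K$ that is uniform in $j$; since $|K|<\infty$, such a constant bound is an integrable majorant on $K$, and the conclusion follows. One may assume $f\not\equiv 0$: if $f\equiv 0$, then $f_{j}\to 0$ in $W^{1,1}(\mathbb{R})$ and the boundedness of $f\mapsto(\widetilde M_{\beta}f)'$ from $W^{1,1}(\mathbb{R})$ to $L^{q}(\mathbb{R})$ \cite[Theorem 1]{CaMa} already gives $\int_{\mathbb{R}}|(\widetilde M_{\beta}f_{j})'|^{q}\le C\|f_{j}'\|_{1}^{q}\to 0$, which is stronger than the asserted convergence.

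The key point is a uniform lower bound for the good radii on $K$: I claim there exist $\rho>0$ and $j_{0}\in\mathbb{N}$ such that every good radius $r_{x,j}$ of $f_{j}$ at any point $x\in K$ satisfies $r_{x,j}\ge\rho$ for all $j\ge j_{0}$. To obtain it, recall first that $W^{1,1}(\mathbb{R})$ embeds continuously into $L^{\infty}(\mathbb{R})$, so $\|f_{j}-f\|_{\infty}\to 0$ and hence $\|f_{j}\|_{\infty}\le 2\|f\|_{\infty}$ for $j$ large; second, $\widetilde M_{\beta}f$ is continuous \cite[Theorem 1]{CaMa} and everywhere positive (a sufficiently large ball centred at $x$ has positive average of $|f|$), so $c_{K}:=\min_{K}\widetilde M_{\beta}f>0$, and by Lemma \ref{unif conv lemma} we get $\widetilde M_{\beta}f_{j}\ge c_{K}/2$ on $K$ for $j$ large. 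Then, writing a good ball of $f_{j}$ at $x\in K$ as $B_{x,j}=B(z,r_{x,j})$,
\[
\tfrac{c_{K}}{2}\le\widetilde M_{\beta}f_{j}(x)=r_{x,j}^{\beta}\intav_{B_{x,j}}|f_{j}|\le r_{x,j}^{\beta}\,\|f_{j}\|_{\infty}\le 2\|f\|_{\infty}\,r_{x,j}^{\beta},
\]
which forces $r_{x,j}\ge\bigl(c_{K}/(4\|f\|_{\infty})\bigr)^{1/\beta}=:\rho>0$. This is exactly where $\beta>0$ enters: for $\beta=0$ the displayed inequality carries no information and the good radii may collapse to $0$, which is why this route breaks down at $\beta=0$. (A matching upper bound $r_{x,j}\le R$ follows from $\widetilde M_{\beta}f_{j}(x)=\tfrac12 r_{x,j}^{\beta-1}\int_{B_{x,j}}|f_{j}|\le\tfrac12 r_{x,j}^{\beta-1}\|f_{j}\|_{1}$ and $\beta-1<0$, but it will not be needed.)

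Granting the lower bound, the representation formula of Lemma \ref{representacion de la derivada} (applied to $f_{j}$ and to an arbitrary good ball) gives, for a.e.\ $x\in K$ and all $j\ge j_{0}$,
\[
|(\widetilde M_{\beta}f_{j})'(x)|=r_{x,j}^{\beta}\left|\intav_{B_{x,j}}|f_{j}|'\right|=\tfrac12 r_{x,j}^{\beta-1}\left|\int_{B_{x,j}}|f_{j}|'\right|\le\tfrac12\rho^{\beta-1}\int_{B_{x,j}}|f_{j}'|\le\tfrac12\rho^{\beta-1}\|f_{j}'\|_{1},
\]
where I used $\beta-1<0$ together with $r_{x,j}\ge\rho$, and $||f_{j}|'|=|f_{j}'|$ a.e. Since $\sup_{j}\|f_{j}'\|_{1}<\infty$, this bounds $|(\widetilde M_{\beta}f_{j})'|$ on $K$ by a constant $C$ independent of $j\ge j_{0}$, so $|(\widetilde M_{\beta}f_{j})'|^{q}\le C^{q}\chi_{K}\in L^{1}(\mathbb{R})$; the dominated convergence theorem together with Lemma \ref{convergencia puntual donde discolan} then yields $\int_{K}|(\widetilde M_{\beta}f_{j})'|^{q}\to\int_{K}|(\widetilde M_{\beta}f)'|^{q}$. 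The main obstacle is the uniform lower bound on the good radii; the remainder is routine, apart from minor bookkeeping to make the constants uniform in $j$ and to handle the finitely many indices $j<j_{0}$ (each $(\widetilde M_{\beta}f_{j})'$ is bounded on $K$ by the same argument with a $j$-dependent constant, or one invokes the eventual-domination form of the dominated convergence theorem).
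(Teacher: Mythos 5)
Your proposal is correct and follows essentially the same route as the paper: a uniform positive lower bound on the good radii over $K$ (obtained from $\min_{K}\widetilde M_{\beta}f>0$ together with the uniform convergence $\widetilde M_{\beta}f_{j}\to\widetilde M_{\beta}f$ of Lemma \ref{unif conv lemma}), followed by the representation formula of Lemma \ref{representacion de la derivada} and dominated convergence via Lemma \ref{convergencia puntual donde discolan}. The only cosmetic difference is that you bound the averages of $|f_{j}|$ over the good balls by $\|f_{j}\|_{\infty}$ through the embedding $W^{1,1}(\mathbb{R})\hookrightarrow L^{\infty}(\mathbb{R})$, whereas the paper bounds them by $\max_{K}\widetilde Mf$ using the Hardy--Littlewood maximal function; both give the same conclusion.
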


\begin{proof}[Proof of Lemma \ref{compact lemma}]
First of all, we assume without lost of generality that $f\neq0$ (because in that case the result follows directly from the boundedness theorem \cite[Theorem 1]{CaMa}). Since $\widetilde M_{\beta}f$ and $Mf$ are continuous (see \cite[Theorem 2.5]{AlPe} and \cite[Theorem 1]{CaMa}), there are positive constants $C_{K}$ and $\overline C_{K}$ such that
$$
\inf_{x\in K}\widetilde M_{\beta}f(x)=\min_{x\in K}\widetilde M_{\beta}f(x)=C_{K}>0
$$
and
$$
\sup_{x\in K}\widetilde Mf(x)=\max_{x\in K}\widetilde Mf(x)=\overline C_{K}>0.
$$
Then by Lemma \ref{unif conv lemma} we have that there is $j_{1}(K)$ such that $\|\widetilde M_{\beta}f_{j}-\widetilde M_{\beta}f\|_{\infty}\leq C_{K}/2$ and $\|\widetilde Mf_{j}-\widetilde Mf\|_{\infty}\leq \overline C_{K}/2$ for every $j\geq j_{1}(K)$. Therefore
$$\widetilde M_{\beta}f_{j}(x)\geq \widetilde M_{\beta}f(x)-C_{K}/2\geq C_{K}/2$$ for every $j\geq j_{1}(K)$. Thus
\begin{eqnarray*}
\frac{3}{2}\overline C_{K}r^{\beta}_{j,x}\geq r^{\beta}_{j,x}\widetilde Mf_{j}(x)\geq\frac{r^{\beta}_{j,x}}{2r_{j,x}}\int_{B_{j,x}}|f_{j}|=\widetilde M_{\beta}f_{j}(x)\geq C_{K}/2.
\end{eqnarray*}
Therefore
\begin{equation}\label{low bound for radi}
r_{j,x}\geq \left(\frac{C_{K}}{3\overline C_{K}}\right)^{\frac{1}{\beta}}=:\widetilde C_{K} \ \ \text{for every} \ \ j\geq j_{1}(K), x\in K.
\end{equation}

Using \eqref{low bound for radi} we have that for every $x\in K$
\begin{eqnarray*}
|(\widetilde M_{\beta}f_{j})'(x)|&=&\frac{r^{\beta}_{j,x}}{2r_{j,x}}\left|\int_{B_{x,j}}|f_{j}|'\right|\\
&\leq&\frac{1}{2\widetilde C_{K}^{1-\beta}}\left[\int_{\mathbb{R}}||f_{j}|'-|f|'|+\int_{\mathbb{R}}||f|'|\right]\\
&\leq& \frac{1}{\widetilde C_{K}^{1-\beta}}\int_{\mathbb{R}}||f|'|, \ \ 
\end{eqnarray*}
for every\ \ $j\geq\max\{j_{1}(K),j_{0}\}$. Here $j_0$ is a positive integer such that $\||f_j|'-|f|'\|_{L^{1}(\RR)}\leq \||f|'\|_{L^{1}(\RR)}$ for every $j\geq j_0$.

Therefore by the Dominated Convergence Theorem, using Lemma \ref{convergencia puntual donde discolan} we conclude that
\begin{equation}
\int_{K}|(\widetilde M_{\beta}f_{j})'|^{q}\to\int_{K}|(\widetilde M_{\beta}f)'|^{q} \ \ \text{as} \ \ j\to\infty.
\end{equation}

\end{proof}


Heuristically speaking, as a consequence of Lemma \ref{compact lemma}, in order to get our desired result it is enough to prove that given a interval $[a,b]$ with $\int_{[a,b]^c}|(\widetilde M_{\beta}f)'|^q$ ``small", we must have that $\int_{[a,b]^c}|(\widetilde M_{\beta}f_j)'|^q$ is also ``small" for every $j$ sufficiently large. 

\section{Proof of  Theorem \ref{Main Theorem}
}

\begin{proof}[Proof of Theorem \ref{Main Theorem}]
Given $\varepsilon>0$, there is a real number $y>\frac{1}{\varepsilon}>0$ such that
\begin{eqnarray*}
&&\int_{-\infty}^{-y}|(f)'|<\varepsilon,\int_{y}^{\infty}|(f)'|<\varepsilon,\int_{-\infty}^{-y}|(\widetilde M_{\beta}f)'|^{q}<\varepsilon, \int_{y}^{\infty}|(\widetilde M_{\beta}f)'|^{q}<\varepsilon,\\
&& \widetilde M_{\beta}f(-4y)<\varepsilon \ \ \text{and}\ \ \ \widetilde M_{\beta}f(4y)<\varepsilon.
\end{eqnarray*}

For every $\varepsilon>0$, there is $j_{\varepsilon}$ such that 
$$
\int_{\mathbb{R}}|f'-f'_{j}|\leq \varepsilon\ \ \text{and}\ \ \|\widetilde M_{\beta}f_{j}-\widetilde M_{\beta}f\|_{L^{\infty}(\mathbb{R})}<\varepsilon \ \ \text{for every} \ \ j\geq j_{\varepsilon}.
$$ 
From now on, for every $x\in{[3y,\infty)}$, we will fix a good ball $B_x$ (and $B_{x,j}$) for $f$ (and $f_j$, respectively) at the point $x$. Given $x\geq 3y$ we define
$
B_{x}=(a_x,b_x)
$
to be a ball such that $(a_x,b_x)$ is a good ball for $f$ in $x$, $a_x$ is maximum possible and once we have found $a_x$, $b_x$ is the minimum possible. The fact that defintion is well posed follows using that we are considering functions in $L^{1}(\RR)$, details are left to interested lector. Analogously we define $a_{x,j}$ and $b_{x,j}$ for every $j$.

Therefore, given $j\geq j_{\varepsilon}$, we have
\begin{eqnarray}\label{I+II}
\int_{3y}^{\infty}|(\widetilde M_{\beta}f_{j})'(x)|^{q}dx&=&\int_{3y}^{\infty}|(\widetilde M_{\beta}f_{j})'(x)|^{q}\chi_{B^{c}_{x,j}}(y)dx\nonumber\\
&&+\int_{3y}^{\infty}|(\widetilde M_{\beta}f_{j})'(x)|^{q}\chi_{B^{}_{x,j}}(y)dx\nonumber\\
&=&I+II.
\end{eqnarray}

To estimate I, we use the boundedness result \cite[Theorem 1]{CaMa}
\begin{eqnarray}\label{I}
&&\int_{3y}^{\infty}|(\widetilde M_{\beta}f_{j})'(x)|^{q}\chi_{B^{c}_{x,j}}(y)dx\nonumber\\
&&\leq \int_{3y}^{\infty}|(\widetilde M_{\beta}(f_{j}\chi_{[y,\infty]})'(x))|^{q}\nonumber\\
&&\leq C\|f'_{j}\chi_{[y,\infty]}\|^{q}_{L^{1}(\mathbb{R})}\nonumber\\
&&\leq (\|f'_{j}-f'\|_{L^{1}(\mathbb{R})}+\|f'\chi_{[y,\infty]}\|_{L^{1}(\mathbb{R})})^{q}\nonumber\\
&&\leq(2\varepsilon)^{q}.
\end{eqnarray}

\newpage
The key ingredients to estimate II are the following
\begin{itemize}
\item Claim 1: $V_{y,j}=\{x\in(3y,\infty), y\in B_{x,j}\}$ is an open set. To see this we take a point $x\in V_{y,j}$, it implies that $a_x<y$, assuming (by contradiction) that there is not a neighboorhod of $x$ contained in $V_{y,j}$ we would have a sequence $\{x_{i}\}_{i\in\N}\subset V_{y,j}^{c}$ such that $x_{i}\to x$ as $i\to\infty$, $x_{i}\in V^{c}_{y,j}$ implies $y\leq a_{x_i},$ using the fact that $\{a_{x_i}\}_{i\in\N}$ and $\{b_{x_i}\}_{i\in\N}$ are bounded sequences, by passing to subsequences if necessary we have that $a_{x_{i}}\to a$ and $b_{x_{i}}\to b$ for some numbers $a$ and $b$ such that $(a,b)$ is a good ball for $f$ at $x$, thus $y\leq \lim_{i\to\infty} a_{x_i}=a\leq a_x$, by construction, and it is a contradiction.
\item Claim 2: For every $z$, $w\in V_{y,j}$ such that $z<w$ we have that $a_w<y<z<w\leq b_w$ therefore
$$
\widetilde M_{\beta}f(w)\leq \widetilde M_{\beta}f(z).
$$
\item Claim 3: For every $x\in V_{y,j}$ we have that 
$$r_{x,j}= \frac{b_{x,j}-a_{x,j}}{2}\geq \frac{x-y}{2}\geq y.$$
\end{itemize}
Using Claim 1 we see that $V_{{y,j}}=\{x\in(3y,\infty),y\in B_{x,j}\}=\cup_{i} (a_{i},b_{i})$, with $a_{1}<b_{1}<a_{2}<b_{2}<\dots$, then using Claim 2 we have that $\widetilde M_{\beta}f_j$ is a non-increasing function in $V_{y,j}$, in particular $(\widetilde M_{\beta}f_{j})'(x)\leq0$ for almost every $x\in V_{y,j}$ and $\widetilde M_{\beta}f_{j}(b_i)>\widetilde M_{\beta}f_{j}(a_{i+1})$, for every $i$, therefore (using Claim 3 in the sixth line of the computation below).

\begin{eqnarray}\label{II}
II&=&\int_{4y}^{\infty}|(\widetilde M_{\beta}f_{j})'(x)|^{q}\chi_{B^{}_{x,j}}(y)dx\nonumber\\
&=&\int_{W_{y}}|(\widetilde M_{\beta}f_{j})'(x)|^{q}dx\nonumber\\
&=&\sum_{i}\int_{a_{i}}^{b_{i}}|(\widetilde M_{\beta}f_{j})'(x)|^{q}dx\nonumber\\
&=&\sum_{i}\int_{a_{i}}^{b_{i}}(-(\widetilde M_{\beta}f_{j})'(x))^{q}dx\nonumber\\
&\leq& \|(\widetilde M_{\beta}f_{j})'\|^{q-1}_{L^{\infty}(W_{y})}\sum_{i}\int_{a_{i}}^{b_{i}}(-(\widetilde M_{\beta}f_{j})'(x))\nonumber\\
&\leq&\frac{C}{|y|^{\beta}}\||f_j|'\|^{q-1}_{L^{1}(\mathbb{R})}\left(\sum_{i}(\widetilde M_{\beta}f_{j}(a_{i})-\widetilde M_{\beta}f_{j}(b_i))\right)\nonumber\\
&\leq&\frac{C}{|y|^{\beta}}\||f_j|'\|^{q-1}_{L^{1}(\mathbb{R})}\widetilde M_{\beta}f_{j}(4y)\nonumber\\
&\leq&\frac{C}{|y|^{\beta}}\left(\||f_j|'-|f|'\|_{L^{1}(\mathbb{R})}+\||f|'\|_{L^{1}(\mathbb{R})}\right)^{q-1}\nonumber\\
&&\times\left(\widetilde M_{\beta}f_j(4y)-\widetilde M_{\beta}f(4y)+\widetilde M_{\beta}f(4y)\right)\nonumber\\
&\leq&C\varepsilon^{\beta}(\varepsilon+\||f|'\|_{L^{1}(\mathbb{R})})^{q-1}(2\varepsilon).
\end{eqnarray}

From \eqref{I+II}, \eqref{I} and \eqref{II} we get
\begin{eqnarray*}
\int_{4y}^{\infty}|(\widetilde M_{\beta}f_{j})'(x)|^{q}dx\leq (2\varepsilon)^{q}+C\varepsilon^{\beta}(\varepsilon+\||f|'\|_{L^{1}(\mathbb{R})})^{q-1}(2\varepsilon),
\end{eqnarray*}
for every $j\geq j_\varepsilon$. Analogously,
\begin{eqnarray*}
\int_{-\infty}^{-4y}|(\widetilde M_{\beta}f_{j})'(x)|^{q}dx\leq (2\varepsilon)^{q}+C\varepsilon^{\beta}(\varepsilon+\||f|'\|_{L^{1}(\mathbb{R})})^{q-1}(2\varepsilon).
\end{eqnarray*}

Therefore, by Lemma \ref{compact lemma}, there is $\widetilde j_{\epsilon}\geq j_\varepsilon$ such that
\begin{eqnarray*}
\int_{\mathbb{R}}|(\widetilde M_{\beta}f_{j})'(x)|^{q}dx&\leq&\int_{-3y}^{3y}|(\widetilde M_{\beta}f_{j})'(x)|^{q}dx\\
&&+2((2\varepsilon)^{q}+C\varepsilon^{\beta}(\varepsilon+\||f|'\|_{L^{1}(\mathbb{R})})^{q-1}(2\varepsilon))\\
&\leq&\int_{-3y}^{3y}|(\widetilde M_{\beta}f_{})'(x)|^{q}dx+\varepsilon\\
&&+2((2\varepsilon)^{q}+C\varepsilon^{\beta}(\varepsilon+\||f|'\|_{L^{1}(\mathbb{R})})^{q-1}(2\varepsilon))\\
&\leq&\int_{\mathbb{R}}|(\widetilde M_{\beta}f_{})'(x)|^{q}dx+\varepsilon\\
&&+2((2\varepsilon)^{q}+C\varepsilon^{\beta}(\varepsilon+\||f|'\|_{L^{1}(\mathbb{R})})^{q-1}(2\varepsilon))
\end{eqnarray*}
for every $j\geq \widetilde j_{\varepsilon}$. Finally, since $\varepsilon>0$ is arbitrary
 \ we conclude that

\begin{equation}\label{limsup}
\limsup_{j\to\infty}\int_{\mathbb{R}}|(\widetilde M_{\beta}f_{j})'(x)|^{q}dx\leq \int_{\mathbb{R}}|(\widetilde M_{\beta}f)'(x)|^{q}dx.
\end{equation}

\noindent
On the other hand, using Lemma \ref{convergencia puntual donde discolan} and Fatou's Lemma, we have that 

\begin{equation}\label{liminf}
\liminf_{j\to\infty}\int_{\mathbb{R}}|(\widetilde M_{\beta}f_{j})'(x)|^{q}dx\geq \int_{\mathbb{R}}|(\widetilde M_{\beta}f)'(x)|^{q}dx.
\end{equation}

\noindent
Combining \eqref{limsup}, \eqref{liminf} and Remark \ref{remark1}, we conclude the proof of Theorem \ref{Main Theorem}.

\end{proof}






\section{Discrete setting -- Proof of Theorem \ref{Thm2}}
Let $f\in BV(\Z)$ and $x\leq y$ integers, the average of $f$ in an interval $[x,y]$ is given by
$$
A_{[x,y]}f:=\frac{1}{y-x}\sum_{k=x}^{y}|f(k)|.
$$
We say that $r$ is a good radius for $f$ at the point $n$ if 
$$
Mf(n)=\frac{1}{2r+1}\sum_{k=-r}^{r}|f(n+k)|=A_{r}f(n).
$$ 
We say that an interval $[x,y]$ (with $x,y\in\Z$) is a local maximum for $f$ if 
$$
f(x-1)<f(x)=f(z)=f(y)>f(y+1)
$$
for every $z\in[x,y]$. Analogously, we say that an interval $[x,y]$ (with $x,y\in \Z$) is a local minimum for $f$ if
$$
f(x-1)>f(x)=f(z)=f(y)<f(y+1)
$$
for every $z\in[x,y]$. Finally, we say that $m\in\Z$ is a global maximum (respectively minimum) for $f$ if
$$
f(m)\geq f(n)\ \ \ (\text{respectively}\ \leq)\ \ \text{for every} \ n\in\Z.
$$


{We start proving some useful lemmas.}
\begin{lemma}\label{uniform convergence disc}
Given a function $f\in BV(\mathbb{Z})$, and a sequence $\{f_{j}\}_{j\in\mathbb{N}}\subset BV(\mathbb{Z})$, such that $\|f-f_j\|_{BV(\Z)}\to0$ as $j\to\infty$, 
then
\begin{equation*}
\|Mf-Mf_j\|_{\ell^{\infty}(\mathbb{Z})}\to0 \ \text{as}\ \ j\to\infty.
\end{equation*}
\end{lemma}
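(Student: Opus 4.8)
The plan is to prove the uniform convergence $\|Mf - Mf_j\|_{\ell^\infty(\Z)} \to 0$ by exploiting the sublinearity of the discrete maximal operator together with an embedding of $BV(\Z)$ into $\ell^\infty(\Z)$. First I would record the elementary pointwise bound $|Mf(n) - Mf_j(n)| \le M(f - f_j)(n) \le \|f - f_j\|_{\ell^\infty(\Z)}$, valid for every $n \in \Z$, which follows from the triangle inequality $\big||f(m)| - |f_j(m)|\big| \le |f(m) - f_j(m)|$ inside each average and from the fact that every average of $|f-f_j|$ over a symmetric interval is at most its supremum norm. Hence it suffices to show $\|f - f_j\|_{\ell^\infty(\Z)} \to 0$.

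The second step is to verify that $BV(\Z) \hookrightarrow \ell^\infty(\Z)$ continuously, i.e. $\|g\|_{\ell^\infty(\Z)} \le \|g\|_{BV(\Z)}$ for every $g \in BV(\Z)$. This is immediate from the definition of the norm in \eqref{BV norm discrete}: for any $n$, writing $g(n) = g(-\infty) + \sum_{k < n} g'(k)$ (the limit $g(-\infty)$ exists and the telescoping series converges absolutely since $g' \in \ell^1(\Z)$), we get $|g(n)| \le |g(-\infty)| + \sum_{k \in \Z} |g'(k)| = |g(-\infty)| + Var(g) = \|g\|_{BV(\Z)}$. Applying this to $g = f - f_j$ gives $\|f - f_j\|_{\ell^\infty(\Z)} \le \|f - f_j\|_{BV(\Z)} \to 0$, and combining with the first step finishes the proof.

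I do not expect any serious obstacle here; this lemma is genuinely elementary and the only mild subtlety is making sure the maximal operator is well-defined and the sublinearity estimate is uniform in the radius $r$ (which it is, since each normalized sum is a convex average). The one point worth stating carefully is that $Mf(n)$ and $Mf_j(n)$ are finite for each fixed $n$: this also follows from $\ell^\infty$ control, since $Mf(n) \le \|f\|_{\ell^\infty(\Z)} \le \|f\|_{BV(\Z)} < \infty$, so the difference $Mf(n) - Mf_j(n)$ is meaningful and the bound $M(f-f_j)(n)$ controls it from both sides. Thus the proof reduces to these two short observations and no hard analysis is required.
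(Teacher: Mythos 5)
Your proof is correct and follows essentially the same route as the paper: both reduce the claim to the embedding $\|g\|_{\ell^\infty(\Z)}\le\|g\|_{BV(\Z)}$ (which the paper derives by the same telescoping argument, writing the difference at $m$ against its value at $n$ and letting $n\to-\infty$) and then invoke the sublinearity of $M$ to get $\|Mf-Mf_j\|_{\ell^\infty(\Z)}\le\|f-f_j\|_{\ell^\infty(\Z)}\le\|f-f_j\|_{BV(\Z)}$. Your additional remarks on finiteness of $Mf(n)$ and uniformity in $r$ are harmless elaborations of the same argument.
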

\begin{proof}[Proof of Lemma \ref{uniform convergence disc}]
Given $m\in\Z$ we have that
\begin{eqnarray*}
f(m)-f_j(m)&=&(f(m)-f_j(m))-(f(n)-f_j(n))+(f(n)-f_j(n))\\
&\leq&Var(f-f_j)+(f(n)-f_{j}(n))
\end{eqnarray*}
for every $n$, thus
\begin{eqnarray*}
f(m)-f_j(m)&\leq& Var(f-f_j)+\lim_{n\to-\infty}(f(n)-f_j(n))\\
&=&\|f-f_j\|_{BV(\Z)}.
\end{eqnarray*}
Therefore, by the sublinearity of $M$,
$$
\|Mf-Mf_j\|_{\ell^{\infty}(\Z)}\leq\|f-f_j\|_{\ell^{\infty}(\Z)}\leq \|f-f_j\|_{BV(\Z)}\to0\ \ \text{as} \ \ j\to\infty.
$$
\end{proof}

\noindent As a consequence of Lemma \ref{uniform convergence disc}.
\begin{corollary}\label{point conv deriv} Under the hypotheses of Lemma \ref{uniform convergence disc}, we have
$$
\|(Mf)'-(Mf_j)'\|_{\ell^{\infty}(\Z)}\to0\ \ \text{as}\ \ j\to\infty.
$$
\end{corollary}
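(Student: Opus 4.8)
The plan is to reduce the statement to the uniform convergence $Mf_j\to Mf$ already proved in Lemma~\ref{uniform convergence disc}, using only the fact that the discrete derivative is a finite-difference operator. First I would unwind the definition $g'(n)=g(n+1)-g(n)$ applied to $g=Mf$ and $g=Mf_j$, writing, for every $n\in\Z$,
$$
(Mf)'(n)-(Mf_j)'(n)=\bigl(Mf(n+1)-Mf_j(n+1)\bigr)-\bigl(Mf(n)-Mf_j(n)\bigr).
$$
The triangle inequality then gives
$$
\bigl|(Mf)'(n)-(Mf_j)'(n)\bigr|\le \bigl|Mf(n+1)-Mf_j(n+1)\bigr|+\bigl|Mf(n)-Mf_j(n)\bigr|\le 2\,\|Mf-Mf_j\|_{\ell^{\infty}(\Z)},
$$
uniformly in $n$. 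Taking the supremum over $n\in\Z$ yields $\|(Mf)'-(Mf_j)'\|_{\ell^{\infty}(\Z)}\le 2\,\|Mf-Mf_j\|_{\ell^{\infty}(\Z)}$, and the conclusion follows by letting $j\to\infty$ and invoking Lemma~\ref{uniform convergence disc}.

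I do not expect any genuine obstacle here: the corollary is simply the composition of the elementary bound $\|g'\|_{\ell^{\infty}(\Z)}\le 2\|g\|_{\ell^{\infty}(\Z)}$ (applied to $g=Mf-Mf_j$, which makes sense since $Mf$ and $Mf_j$ are both finite-valued) with the uniform convergence supplied by the preceding lemma. The only point worth a word of care is that $Mf-Mf_j$ need not itself lie in any nice space a priori, but we only ever use its $\ell^{\infty}$ norm, which is finite by Lemma~\ref{uniform convergence disc} once $j$ is large, so the displayed inequalities are legitimate.
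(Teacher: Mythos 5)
Your proof is correct and is exactly the argument the paper intends: the paper states the corollary with no written proof beyond the phrase ``as a consequence of Lemma \ref{uniform convergence disc}'', and the intended consequence is precisely your elementary bound $\|g'\|_{\ell^{\infty}(\Z)}\le 2\|g\|_{\ell^{\infty}(\Z)}$ applied to $g=Mf-Mf_j$ together with the uniform convergence from that lemma. Nothing is missing.
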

\noindent Therefore, by the Brezis--Lieb Lemma, we see that to obtain Theorem \ref{Main Theorem} it is enough to prove that
\begin{equation}\label{BL disc}
\lim_{j\to\infty}\|(Mf_j)'\|_{\ell^{1}(\Z)}=\|(Mf)'\|_{\ell^{1}(\Z)}.
\end{equation}

\begin{lemma}\label{radi}
Let $f:\Z\to\RR$ be a function in $ BV(\Z)$. 
\begin{itemize}
\item If $Mf(m)<\infty$ for some $m\in\Z$ then $Mf(n)<\infty$ for every $n\in\Z$.
\item If $n$ is not a global minimum for $Mf$, then there is a radius $r_{n}$ such that
$$
Mf(n)=A_{r_n}f(n)=\frac{1}{2r_{n}+1}\sum_{k=-r_{n}}^{r_{n}}|f(n+k)|.
$$ 
\end{itemize}
\end{lemma}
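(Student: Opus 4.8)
The plan is to treat the two bullet points separately; both are soft consequences of elementary properties of the averages $A_r f(n)$ together with the structure of $BV(\Z)$.

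For the first bullet I would argue by enlarging windows. Since $[n-r,n+r]\subseteq[m-r-|n-m|,\,m+r+|n-m|]$ for every $r\ge 0$, we have
$$
(2r+1)A_r f(n)=\sum_{k=-r}^{r}|f(n+k)|\le\sum_{k=-(r+|n-m|)}^{r+|n-m|}|f(m+k)|=\bigl(2(r+|n-m|)+1\bigr)A_{r+|n-m|}f(m)\le\bigl(2(r+|n-m|)+1\bigr)Mf(m),
$$
and hence $A_r f(n)\le(1+2|n-m|)Mf(m)$ uniformly in $r\ge 0$, so that $Mf(n)\le(1+2|n-m|)Mf(m)<\infty$. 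One could alternatively just invoke that every $f\in BV(\Z)$ is bounded, with $\|f\|_{\ell^{\infty}(\Z)}\le\|f\|_{BV(\Z)}$ (as observed in the proof of Lemma \ref{uniform convergence disc}), giving $Mf\le\|f\|_{\ell^{\infty}(\Z)}$ everywhere; the window comparison above, however, uses neither boundedness nor membership in $BV$.

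For the second bullet the key point is that $A_r f(n)$ converges, as $r\to\infty$, to a limit that does not depend on $n$. Since $Var(f)<\infty$ the one-sided limits $f(\pm\infty)$ exist, so $|f(n+r)|+|f(n-r)|\to|f(+\infty)|+|f(-\infty)|$, and applying the Stolz--Ces\`aro theorem to $S_r:=\sum_{k=-r}^{r}|f(n+k)|$ with denominator $2r+1$ yields
$$
\lim_{r\to\infty}A_r f(n)=\tfrac12\bigl(|f(+\infty)|+|f(-\infty)|\bigr)=:L,
$$
a quantity independent of $n$; in particular $Mf(m)\ge L$ for every $m\in\Z$. Now suppose $n$ is not a global minimum of $Mf$: then there is $m$ with $Mf(m)<Mf(n)$, whence $Mf(n)>Mf(m)\ge L$. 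Setting $\delta:=\tfrac12(Mf(n)-L)>0$ and using $A_r f(n)\to L$, choose $R$ with $A_r f(n)<Mf(n)-\delta$ for all $r>R$. Then $Mf(n)=\sup_{0\le r\le R}A_r f(n)$ is a supremum over a finite set, hence a maximum attained at some $r_n\in\{0,1,\dots,R\}$, which is the required good radius.

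I do not anticipate a serious obstacle: the only steps requiring a little care are the verification of the Ces\`aro limit (especially the fact that $L$ is the same for all $n$) and the soft observation that, since $Mf\ge L$ everywhere, "not a global minimum'' already forces $Mf(n)>L$. If instead $Mf\equiv L$ — for instance when $|f|$ is constant — every point is a global minimum and the second statement is vacuous, which is consistent with the claim.
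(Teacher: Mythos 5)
Your proof is correct. The paper disposes of both items in one line by citing the comparison inequality $A_{r}f(m) \geq A_{r}f(n) - \tfrac{2C|m-n|}{2r+1}$ with $C=\|f\|_{\ell^{\infty}(\Z)}$ from \cite[Lemma 6]{CaMa}; from it, the first item follows since $A_rf(n)\le Mf(m)+2C|m-n|$ uniformly in $r$, and the second follows since, for $m$ with $Mf(m)<Mf(n)$, one gets $A_rf(n)\le Mf(m)+\tfrac{2C|m-n|}{2r+1}<Mf(n)$ for all large $r$, so the supremum must be attained at a bounded radius. Your argument implements the same underlying mechanism --- large-radius averages are asymptotically insensitive to the base point --- but is self-contained: the window-enlargement bound $A_rf(n)\le(1+2|n-m|)Mf(m)$ replaces the cited inequality for the first item (and, as you note, does not even use boundedness of $f$), and for the second item you compute the exact Stolz--Ces\`aro limit $L=\tfrac12(|f(+\infty)|+|f(-\infty)|)$, which is a slightly stronger statement than the paper needs (the paper only requires $\limsup_r A_rf(n)\le Mf(m)$). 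Both routes are sound; yours trades the external citation for a short direct computation and makes the vacuousness of the statement when $Mf\equiv L$ explicit, while the paper's inequality is quantitative and reused elsewhere in \cite{CaMa}.
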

\noindent From now on for every $n\in\Z$ such that $n$ is not a global minimum for $Mf$ we will denote by $r_{n}$ a fixed good radius for $f$ at the point $n$.
\begin{proof}[Proof of Lemma \ref{radi}]
Both of the items follow as a consequence of the next inequality:
\begin{equation}\label{Sec4_lem6_eq2}
A_{r}f(m) \geq A_{r}f(n) - \frac{2 C |m-n|}{2r+1}\,,
\end{equation}
valid for any points $n,m\in\Z$ and any radius $r\in\Z$, where $C = \|f\|_{\ell^{\infty}(\Z)}$ \cite[Lemma 6]{CaMa}.


\end{proof}



The next lemma will be crucial in the proof of Theorem \ref{Thm2}, and it follows from Kurka's ideas in \cite{Ku}. 
\begin{lemma}\label{existencia de s}
Given a function $f:\Z\to\RR$, if $[a_{-},a_{+}]$ is a local maximum of $Mf$ such that $Mf(a)\neq f(a)$ for every $a\in [a_{-},a_{+}]$ and $[b_{-},b_{+}]$ is a local minimum of $Mf$, such that $b_{+}<a_{-}$ and $Mf$ is monotone in $[b_{+},a_{-}]$, then there is $s\in[b_{+},a_{+}+r_{a_{+}}]$ such that
$$
|f(s)|\geq Mf(b_{+})+\frac{Mf(a_{+})-Mf(b_{+})}{2(a_{+}-b_{+})}(2r_{a_{+}}+1).
$$ 
\end{lemma}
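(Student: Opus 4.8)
The plan is to fix once and for all the good radius $r:=r_{a_{+}}$, so that $Mf(a_{+})=\frac{1}{2r+1}\sum_{j=a_{+}-r}^{a_{+}+r}|f(j)|$, and to abbreviate $L:=a_{+}-b_{+}$, $V:=Mf(a_{+})$, $v:=Mf(b_{+})$, $D:=V-v$; everything will then follow from an elementary averaging argument on the good ball $[a_{+}-r,a_{+}+r]$. First I would extract from the hypotheses exactly what gets used. Since $[b_{-},b_{+}]$ is a local minimum of $Mf$, we have $Mf(b_{+})<Mf(b_{+}+1)$; since $b_{+}<a_{-}$ and $Mf$ is monotone on $[b_{+},a_{-}]$, that monotonicity is non-decreasing, so $v<Mf(b_{+}+1)\leq Mf(a_{-})=V$, i.e. $D>0$ (and $L\geq 1$). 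Moreover $Mf(a_{+})<\infty$, for otherwise $Mf(a_{+}+1)=\infty$ too by the first item of Lemma~\ref{radi}, contradicting that $[a_{-},a_{+}]$ is a local maximum; hence $Mf$ is finite everywhere, in particular $v<\infty$, and $r_{a_{+}}$ is well defined by the second item of Lemma~\ref{radi}, using that $a_{+}$ is not a global minimum of $Mf$. (The hypothesis $Mf(a)\neq f(a)$ on $[a_{-},a_{+}]$ does not seem to be needed for this particular estimate.)

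I would then split according to whether $r<L$ or $r\geq L$. If $r<L$, then $a_{+}-r>b_{+}$, so the good ball $[a_{+}-r,a_{+}+r]$ lies inside $[b_{+},a_{+}+r]$; its average of $|f|$ equals $V$, so some $s$ in it has $|f(s)|\geq V=v+D$, and since $2r+1\leq 2L-1<2L$ we get $|f(s)|\geq v+D\geq v+\frac{(2r+1)D}{2L}$, which is the assertion. This case only uses $D\geq 0$.

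The substantive case is $r\geq L$, where I would use Kurka's device of collapsing the good ball onto the minimum. Put $J:=[a_{+}-r,\,a_{+}+r-2L]$; one checks $J=[b_{+}-(r-L),\,b_{+}+(r-L)]$ is the centered ball about $b_{+}$ of radius $r-L\geq 0$, so $\frac{1}{2(r-L)+1}\sum_{j\in J}|f(j)|=A_{r-L}f(b_{+})\leq Mf(b_{+})=v$. Subtracting $\sum_{j\in J}|f(j)|\leq (2(r-L)+1)v$ from $\sum_{j=a_{+}-r}^{a_{+}+r}|f(j)|=(2r+1)V$ leaves exactly the $2L$ numbers $|f(j)|$ with $j\in[a_{+}+r-2L+1,\,a_{+}+r]\subseteq[b_{+},a_{+}+r_{a_{+}}]$, whose sum is at least $(2r+1)V-(2(r-L)+1)v=(2r+1)D+2Lv$; hence one of them, at some $s$ in that interval, is at least $v+\frac{(2r+1)D}{2L}=Mf(b_{+})+\frac{(2r_{a_{+}}+1)(Mf(a_{+})-Mf(b_{+}))}{2(a_{+}-b_{+})}$, as required.

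I do not expect a real obstacle here: once one sees that the right move is to shrink the good ball at $a_{+}$ down to a (possibly degenerate) centered ball at $b_{+}$ and bound its average by $Mf(b_{+})$, each case reduces to comparing a finite sum with its number of terms. The care needed is only in the bookkeeping — checking that $J$ is genuinely the ball about $b_{+}$ with $2(r-L)+1$ integer points, that the complementary interval has exactly $2L$ points and sits inside $[b_{+},a_{+}+r_{a_{+}}]$ — and in reading off $D>0$ and the finiteness of $Mf$ from the local-extremum hypotheses.
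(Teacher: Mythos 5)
Your argument is correct, and its core -- collapsing the good ball $[a_+-r_{a_+},a_+-r_{a_+}+2(r_{a_+}-L)]$ onto the centered ball about $b_+$ of radius $r_{a_+}-L$, bounding that average by $Mf(b_+)$, and extracting a large value of $|f|$ from the remaining $2L$ points -- is exactly the computation in the paper's proof. The genuine difference is in the case $r_{a_+}\le L=a_+-b_+$: the paper \emph{excludes} this case by a contradiction argument (if $a_+-r_{a_+}\ge b_+$ one forces $f=Mf=Mf(a_+)$ on a subinterval of the local maximum, contradicting the hypothesis $Mf(a)\neq f(a)$ on $[a_-,a_+]$), whereas you \emph{handle} it directly with the trivial bound $\max_{k\in B}|f(k)|\ge Mf(a_+)\ge Mf(b_+)+\tfrac{2r_{a_+}+1}{2L}\bigl(Mf(a_+)-Mf(b_+)\bigr)$, valid because $2r_{a_+}+1<2L$ there. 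This buys you two things: the observation (correct, as far as this lemma is concerned) that the hypothesis $Mf(a)\neq f(a)$ is not actually needed, and a cleaner bookkeeping -- your complementary interval $[a_++r_{a_+}-2L+1,\,a_++r_{a_+}]$ has exactly $2L$ points, whereas the paper's sum runs from $a_++r_{a_+}-2L$ and so has $2L+1$ terms, which makes its final ``divide by $2(a_+-b_+)$ and take the maximum'' step off by one (harmlessly fixable, precisely by your choice of endpoints). Your preliminary remarks extracting $D>0$ and the existence of $r_{a_+}$ from Lemma \ref{radi} are also sound.
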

\begin{proof}[Proof of Lemma \ref{existencia de s}]
First of all we see that
$
a_+-r_{a_+}<b,
$
since otherwise  taking a point $c\in(a_+,a_++r_{a_+})$ such that $Mf(n)<Mf(a_+)$ for every $n\in(a_++1,c]$, we would have that
$$
Mf(c)<Mf(a_+)=A_{r_{a_+}}f(a_+)\leq \max\{A_{[a_+-r_{a_+},c-(a_++r_{a_+}-c)]}f,Mf(c)\},
$$
which implies
$$
Mf(a_+)=A_{[a_+-r_{a_+},c-(a_++r_{a_+}-c)]}f\leq A_{[a_+-r_{a_+},c-(a_++r_{a_+}-c)]}Mf\leq Mf(a_+),
$$
thus $f(n)=Mf(n)=Mf(a_+)$ for every $n\in [a_+-r_{a_+},c-(a_++r_{a_+}-c)]$ which is a contradiction with the assumptions.

Then
\begin{eqnarray*}
&&(2r_{a_+}+1)Mf(a_+)-(2(b_+-(a_+-r_{a_+}))+1)Mf(b_+)\\
&&\leq (2r_{a_+}+1)Mf(a_+)-(2(b_+-(a_+-r_{a_+}))+1)A_{b_+-(a_+-r_{a_+})}|f|(b_+)\\
&&\leq \sum_{k=2b_+-(a_+-r_{a_+})}^{a_++r_{a_+}}|f(k)|,
\end{eqnarray*}
which implies
$$
(2r_{a_+}+1)(Mf(a_+)-Mf(b_+))+2(a_+-b_+)Mf(b_+)\leq \sum_{k=2b_+-(a_+-r_{a_+})}^{a_++r_{a_+}}|f(k)|.
$$
Thus
\begin{equation*}
\frac{(2r_{a_+}+1)}{2(a_+-b_+)}(Mf(a_+)-Mf(b_+))+Mf(b_+)\leq \frac{1}{2(a_+-b_+)}\sum_{k=a_++r_{a_+}-2(a_+-b_+)}^{a_++r_{a_+}}|f(k)|,
\end{equation*}
and therefore we can choose $s\in[a_++r_{a_+}-2(a_+-b_+),a_++r_{a_+}]$ such that
$$
|f(s)|\geq f(k)|\ \ \text{for every} \ \ k\in[a_++r_{a_+}-2(a_+-b_+),a_++r_{a_+}].
$$
The conclusion follows from this.
\end{proof}


\begin{proof}[Proof of Theorem \ref{Thm2}]
Using Corollary \ref{point conv deriv} by Fatou's Lemma
we have
$$
\liminf_{j\to\infty}\|(Mf_j)'\|_{\ell^{1}(\Z)}\geq\|(Mf)'\|_{\ell^{1}(\Z)}.
$$
Thus, by \eqref{BL disc}, in order to obtain Theorem \ref{Thm2} it is sufficient to prove that
\begin{equation}\label{goal}
\limsup_{j\to\infty}\|(Mf_j)'\|_{\ell^{1}(\Z)}\leq\|(Mf)'\|_{\ell^{1}(\Z)}.
\end{equation}

Given $\delta>0$ there is $k=k(\delta)\in\Z$ such that
$$
Var_{[-k,k]^{c}}(f)<\delta\ \ \text{and}\ \ Var_{[-k,k]^{c}}(Mf)<\delta.
$$
Moreover, by the hypothesis and Corollary \ref{point conv deriv}  given $\epsilon>0$ there is $j_\epsilon$ such that
$$
Var(f-f_j)<\epsilon\ \ \text{and}\ \ \|Mf-Mf_j\|_{\ell^{\infty}(\Z)}<\epsilon
$$
for every $j\geq j_\epsilon$. From now on we assume that $j\geq j_\epsilon$, thus
\begin{equation}\label{compact}
Var_{[-k,k]}(Mf-Mf_j)\leq 2(2k+1)\epsilon. 
\end{equation}

\noindent Moreover
\begin{eqnarray*}
Var_{[k,\infty]}(Mf_j)&=&\|(Mf_{j})'\|_{l^{1}([k,\infty])}\\
&=&\|(Mf_{j})'\|_{l^{1}(\{n\in[k,\infty],k\leq n-r_n\})}\\
&&+\|(Mf_{j})'\|_{l^{1}(\{n\in[k,\infty],n-r_n<k\})}\\
&=&I+II.
\end{eqnarray*}

\noindent
To estimate $I$ we use the boundedness result proved by Temur in \cite{Te} (i.e $Var(Mg)\leq CVar(g)$ for any $g\in BV(\Z)$ where $C$ is a universal constant).
\begin{eqnarray*}
&&\|(Mf_{j})'\|_{l^{1}(\{n\in[k,\infty],k\leq n-r_n\})}\\
&&\leq \|(M(f_{j}\chi_{[k,\infty]}))'\|_{l^{1}(\Z)}\\
&&\leq C\|(f_{j}\chi_{[k,\infty]})'\|_{l^{1}(\Z)}\\
&&\leq C(\|(f_{}\chi_{[k,\infty]})'\|_{l^{1}(\Z)}+\epsilon)\\
&&\leq C(\delta+\epsilon).
\end{eqnarray*}

\noindent
Now we need to estimate $II$. 

{\remark{
If $Mf_j$ is non-increasing in ${\{n\in[k,\infty]}, n-r_n\leq k\}$ we can conclude the desired result. Since in that situation there is $a\in[k,\infty]$ such that
\begin{eqnarray*}
\|(Mf_j)'\|_{l^{1}(n\in[k,\infty], n-r_n\leq k)}&\leq& Mf_j(a)-\lim_{n\to\infty}Mf_j(n)\\
&\leq&Mf(a)-\lim_{n\to\infty}Mf(n)+2\epsilon\\
&\leq&Var_{[k,\infty]}Mf+2\epsilon\\
&\leq&\delta+2\epsilon.
\end{eqnarray*}}}
In general, there are two possibilities:\\
{\it{Case 1:}} If $Mf_{j}(k)-\lim_{n\to\infty}Mf_j(n)\geq \frac{Var_{{\{[k,\infty]}
\}}Mf_j}{2}$. 
We can treat this case as in the previous situation. Since for similar argument we get
$$
Var_{[k,\infty]}Mf_j\leq2(Var_{[k,\infty]}Mf+2\epsilon)\leq 2(\delta+2\epsilon). 
$$ 
{\it{Case 2:}} If $Mf_{j}(k)-\lim_{n\to\infty}Mf_j(n)< \frac{Var_{{\{[k,\infty]\}}}Mf_j}{2}$. This is the most complicated case. We will consider the sequence of local maxima and local minima for $Mf_j$ in $[k,\infty]$
\begin{equation}\label{local max min}
\dots, [b_{i_-},b_{i_+}], [a_{i_-},a_{i_+}], [b_{(i+1)_-},b_{(i+1)_+}], [a_{(i+1)_-},a_{(i+1)_+}], \dots,
\end{equation}
where $[a_{i_-},a_{i_+}]$ denotes a local maximum of $Mf_j$  and $[b_{i_-},b_{i_+}]$ denotes a local minimum of $Mf_j$ for every $i\in\Z$, and $\dots< b_{i_-}\leq b_{i_+}<a_{i_-}\leq a_{i_+}<b_{(i+1)_-}\leq b_{(i+1)_+}<\dots$

%
Given $u\in (k,\infty)$ we can consider the terms in the list \eqref{local max min} lying in the interval $[k,u]$ (if $k$ and $u$ are not appearing in the list \ref{local max min}, for convenience  we include these terms in the list), we see that, if 
$$
S_{1,j}(k,u):=\sum_{\{i, [b_{i_+},a_{i_+}]\subset[k,u]\}}Mf_{j}(a_{i_+})-Mf_{j}(b_{i_+})
$$
and
$$
S_{2,j}(k,u):=\sum_{\{i, [a_{i_+},b_{(i+1)_+}]\subset[k,u]\}}Mf_{j}(a_{i_+})-Mf_{j}(b_{(i+1)_+}),
$$
\noindent using the fact that $Var Mf_{j}<\infty$ we have that
$$
S_{1,j}(k):=S_{1,j}(k,\infty)=\lim_{u\to\infty}S_{1,j}(k,u),
$$
and also
$$
S_{2,j}(k):=S_{2,j}(k,\infty)=\lim_{u\to\infty}S_{2,j}(k,u).
$$
\noindent Then
\begin{eqnarray*}
&&|S_{1,j}(k,u)-S_{2,j}(k,u)|\\
&&=|Mf_{j}(k)-Mf_{j}(u)|\\
&&\ \ \ \to |Mf_{j}(k)-\lim_{u\to\infty}Mf_{j}(u)| \ \text{as}\ \ u\to\infty.
\end{eqnarray*}
Thus, using the hypotheses,
$$
|S_{1,j}(k)-S_{2,j}(k)|=|Mf_{j}(k)-\lim_{u\to\infty}Mf_{j}(u)|<\frac{Var_{[k,\infty]}Mf_j}{2}.
$$
Moreover,
$$
Var_{[k,\infty]}Mf_j=S_{1,j}(k)+S_{2,j}(k).
$$
Therefore
$$
S_{1,j}(k)\geq \frac{Var_{[k,\infty]}Mf_j}{4} \ \text{and} \ \ S_{2,j}(k)\geq \frac{Var_{[k,\infty]}Mf_j}{4}.
$$
Using the first one of the two previous inequalities we obtain
\begin{eqnarray}\label{separacion}
Var_{[k,\infty]}Mf_j&\leq& 4\sum_{\{i,[b_{i_+},a_{i_+}]\subset[k,\infty]\}}Mf_j(a_{i_+})-Mf_j(b_{i_+})\\
&\leq&4Var_{[k,\infty]}f_j+I\\
&&+4\sum_{i\in D_j([k,\infty])}Mf_j(a_{i_+})-Mf_j(b_{i_+}).\nonumber
\end{eqnarray}
\noindent Here $D_j([u,v])=\{i,[b_{i_+},a_{i_+}]\subset[u,v],\  Mf_j(a_{i_+})\neq f_j(a_i)\ \ \text{for every}\ \ a_{i}\in [a_{i_-},a_{i_+}] \ \ \text{and}\ a_{i_{+}}-r_{a_{i_{+}}}\leq k\}$, for every $u,v\in(k,\infty]$. We consider $a_{{j_1}_+}$ such that ${j_{1}}\in D_{j}([k,\infty])$ and
\begin{equation}\label{step1}
\frac{Mf_j(a_{{j_1}_+})-Mf_j(b_{{j_1}_+})}{a_{{j_1}_+}-b_{{j_1}_+}}\geq\frac{Mf_j(a_{i_+})-Mf_j(b_{i_+})}{a_{i_+}-b_{i_+}} \ \text{for every}\ \ i\in D_{j}([k,\infty]).
\end{equation}
It does exist because 
$$
\frac{Mf_j(a_{i_+})-Mf_j(b_{i_+})}{a_{i_+}-b_{i_+}}\leq \|(Mf_j)'\|_{\ell^{\infty}[b_{i_+},a_{i_+}]}\to0 \ \ \text{as}\ \ i\to\infty.
$$

Therefore using Lemma \ref{existencia de s} 
we have
\begin{eqnarray}\label{case no monotone}
&&\sum_{i\in D_j([k,a_{{j_1}_+}+r_{a_{{j_1}_+}}])} Mf_j(a_{i_+})-Mf_j(b_{i_+})\nonumber\\
&&\ \ \ \ \ \ \ \ \ =\sum_{{i\in D_j([k,a_{{j_1}_+}+r_{a_{{j_1}_+}}])}} \frac{Mf_j(a_{i_+})-Mf_j(b_{i_+})}{a_{i_+}-b_{i_+}}(a_{i_+}-b_{i_+})\nonumber\\
&&\ \ \ \ \ \ \ \ \ \leq\sum_{{i\in D_j([k,a_{{j_1}_+}+r_{a_{{j_1}_+}}])}} \frac{Mf_j(a_{{j_1}_+})-Mf_j(b_{{j_1}_+})}{a_{{j_1}_+}-b_{{j_1}_+}}(a_{i_+}-b_{i_+})\nonumber\\
&&\ \ \ \ \ \ \ \ \ =\frac{Mf_j(a_{{j_1}_+})-Mf_j(b_{{j_1}_+})}{a_{{j_1}_+}-b_{{j_1}_+}}\sum_{{i\in D_j([k,a_{{j_1}_+}+r_{a_{{j_1}_+}}])}} (a_{i_+}-b_{i_+})\nonumber\\
&&\ \ \ \ \ \ \ \ \ \leq\frac{Mf_j(a_{{j_1}_+})-Mf_j(b_{{j_1}_+})}{a_{{j_1}_+}-b_{{j_1}_+}}(2r_{a_{{j_1}_+}}+1)\nonumber\\
&&\ \ \ \ \ \ \ \ \ \leq 2(f_j(s_{j_1})-f_j(b_{{j_{1}}_+}))\nonumber\\
&&\ \ \ \ \ \ \ \ \ \leq 2Var_{[k,a_{{j_1}_+}+r_{a_{{j_1}_+}}]}f_j.
\end{eqnarray}
For some $s_{j_1}\in [b_{{j_1}_+}+(b_{{j_1}_+}-(a_{{j_1}_+}-r_{a_{{j_1}_+}})),a_{{j_{1}}_+}+r_{a_{{j_1}_+}}]$. Then we look at the local maximum $[a_{{t_1}_-},a_{{t_{1}}_+}]$ of $Mf_j$ such that $Mf_{j}$ is monotone between $a_{{t_1}_+}$ and $a_{{j_1}_+}+r_{a_{{j_1}_+}}$. If $Mf_j$ is a non-increasing function in $[a_{{t_1}_+},\infty]$ then
\begin{eqnarray}\label{case monotone}
Var_{[k,\infty]}(Mf_j)&\leq& 2Var_{[k,a_{{j_1}_+}+r_{a_{{j_1}_+}}]}(f_j)+Var_{[a_{{t_1}_+},\infty]}(Mf)+2\epsilon\nonumber\\
&\leq& 3\delta+4\epsilon.
\end{eqnarray}
Otherwise we consider a local maximum $[a_{{j_2}_-},a_{{j_2}_+}]\in [a_{{t_1}_+}  ,\infty]$ such that
\begin{eqnarray*}
\frac{Mf_j(a_{{j_2}_+})-Mf_j(b_{{j_2}_+})}{a_{{j_2}_+}-b_{{j_2}_+}}\geq\frac{Mf_j(a_{i_+})-Mf_j(b_{i_+})}{a_{i_+}-b_{i_+}} 
\end{eqnarray*}
{for every}\ \ $[a_{i_-},a_{i_+}]\in[a_{{t_1}_+},\infty]$,
similarly to \eqref{step1}. Then, following the same analysis presented  previously, we have that
\begin{eqnarray*}
&&\sum_{i\in D_j([a_{{t_1}_+},a_{{j_2}_+}+r_{a_{{j_2}_+}}])} Mf_j(a_{i_+})-Mf_j(b_{i_+})\\
&&\ \ \ \ \ \ \ \ \ \leq 2(f_j(s_{j_2})-f_j(b_{{j_{2}}_+}))\\
&&\ \ \ \ \ \ \ \ \ \leq 2Var_{[a_{{t_1}_+},a_{{j_2}_+}+r_{a_{{j_2}_+}}]}f_j,
\end{eqnarray*}
for some $s_{j_2}\in[a_{{t_1}_+},a_{{j_2}_+}+r_{a_{{j_2}_+}}]$.
We can proceed inductively. 
Having defined $([a_{{j_1}_-},a_{{j_1}_+}], s_{j_1}, [a_{{t_1}_-},a_{{t_1}_+}])$, $([a_{{j_2}_-},a_{{j_2}_+}], s_{j_2}, [a_{{t_2}_-},a_{{t_2}_+}])\dots$ $([a_{{j_i}_-},a_{{j_i}_+}], s_{j_i}, [a_{{t_i}_-},a_{{t_i}_+}])$ we have that in case $Mf_j$ is a non-increasing function in $[a_{{t_i}_+},\infty]$ we use the same reasoning as in \eqref{case monotone} to conclude that
\begin{eqnarray*}
Var_{[k,\infty]}Mf_j&\leq& 2Var_{[k,a_{{j_1}_+}+r_{a_{{j_1}_+}}]}f_j+2\sum_{l=2}^{i}Var_{[a_{{t_{l-1}}_+},a_{{j_l}_+}+r_{a_{{j_l}_+}}]}f_j\\&&+Var_{[a_{t_i},\infty]}Mf_j\nonumber\\
&\leq&4Var_{[k,a_{{j_i}_+}+r_{a_{{j_i}_+}}]}f_j+Var_{[a_{{t_i}_+},\infty]}Mf+2\epsilon\nonumber\\
&\leq& 5\delta+6\epsilon,
\end{eqnarray*}
we have used the fact that by construction there is a probable overlap just between consecutive intervals. Otherwise, $Mf_j$ is not a non-increasing function in $[a_{{t_i}_+},\infty)$ thus
we consider a local maximum $[a_{{j_{i+1}}_-},a_{{j_{i+1}}_+}]\in [a_{{t_i}_+}  ,\infty]$ such that
\begin{eqnarray*}
\frac{Mf_j(a_{{j_{i+1}}_+})-Mf_j(b_{{j_{i+1}}_+})}{a_{{j_{i+1}}_+}-b_{{j_{i+1}}_+}}\geq\frac{Mf_j(a_{l_+})-Mf_j(b_{l_+})}{a_{l_+}-b_{l_+}} 
\end{eqnarray*}
{for every}\ \ $[a_{l_-},a_{l_+}]\in[a_{{t_i}_+},\infty]$
then we can look at the local maximum $[a_{{t_{i+1}}_-},a_{{t_{i+1}}_+}]$ of $Mf_j$ such that $Mf_j$ is monotone between $a_{{t_{i+1}}_+}$ and $a_{{j_{i+1}}_+}+r_{a_{{j_{i+1}}_+}}$ and we can get an estimative like \eqref{case no monotone} and continue with the process.

 After all we conclude that
\begin{eqnarray}\label{Dj}
&&\sum_{i\in D_j([k,\infty])}Mf_j(a_i)-Mf_j(b_i)\\
&&\ \ \ \leq 4Var_{[k,\infty]}(f_j)+Var_{[k,\infty]}(Mf)+2\epsilon\nonumber\\
&&\ \ \ \leq 5\delta+6\epsilon. \nonumber
\end{eqnarray}

\noindent
Therefore, combining \eqref{separacion} with \eqref{Dj}, we obtain
\begin{equation}\label{conclusion derecha}
Var_{[k,\infty]} (Mf_j)
\leq 20\delta+24\epsilon.
\end{equation}
Analogously,
\begin{equation}\label{conclusion izquierda}
Var_{[-\infty,-k]} (Mf_j)\leq 20\delta+24\epsilon.
\end{equation}
Finally, combining \eqref{compact},\eqref{conclusion derecha} and \eqref{conclusion izquierda}  we have that
$$
Var (Mf_j)\leq Var_{[-k,k]}(Mf)+(4k+2)\epsilon+40\delta+48\epsilon.
$$
Sending $\epsilon\to0$ it implies that
$$
\limsup_{j\to\infty}Var (Mf_j)\leq Var_{}(Mf)+40\delta.
$$
Now, sending $\delta\to0$ we obtain \eqref{goal}, and conclude the desired result.
\end{proof}

\section{Acknowledgments}
\noindent The author thanks Emanuel Carneiro, Juha Kinnunen and Hannes Luiro for inspiring discussions. 
The author is also thankful to Academy of Finland, Aalto University and The Abdus Salam International Centre for Theoretical Physics (ICTP) for their support. 

\end{document}